\theoremstyle{plain}
\newtheorem*{theorem*}{Theorem}
\newtheorem*{CoronaN}{Corona Theorem for $N$}
\newtheorem{theorem}{Theorem}[section]
\newtheorem{proposition}[theorem]{Proposition}
\newtheorem*{proposition*}{Proposition}
\newtheorem{corollary}[theorem]{Corollary}
\newtheorem*{corollary*}{Corollary}
\newtheorem{lemma}[theorem]{Lemma}
\newtheorem*{lemma*}{Lemma}
\theoremstyle{definition}
\newtheorem*{remark*}{Remark}
\theoremstyle{definition}
\newtheorem*{definition*}{Definition}
\newcommand{\nc}{\newcommand}
\newcommand{\N}{{\mathbb N}}
\newcommand{\D}{{\mathbb D}}
\newcommand{\mD}{\mathcal{D}}
\newcommand{\Int}{\operatorname{Int}}
\newcommand{\Har}{\operatorname{Har}}
\newcommand{\dist}{\operatorname{dist}}
\newcommand{\Lra}{\Longrightarrow}
\newcommand{\lmto}{\longmapsto}
\newcommand{\eps}{\varepsilon}
\nc{\bea}{\begin{eqnarray}}
\nc{\eea}{\end{eqnarray}}
\nc{\beqa}{\begin{eqnarray*}}
\nc{\eeqa}{\end{eqnarray*}}
\nc{\Hi}{H^{\infty}}
\nc{\loi}{\ell^{\infty}}
\nc{\NL}{N^+\vert \Lambda}
\nc{\hf}{{\mathcal H}_{\phi}}
\nc{\liL}{\lambda\in\Lambda}
\nc{\nn}{\nonumber}
\nc{\ol}{\overline}
\nc{\jr}{_{j,r}}
\nc{\ir}{_{i,r}}
\nc{\kr}{_{k,r}}
\nc{\lr}{_{l,r}}
\nc{\dst}{\displaystyle}
\newenvironment{proof*}{\vskip 2mm\noindent {}}{$\blacksquare$ \vskip 2mm}
\numberwithin{equation}{section}
\renewcommand{\Re}{\operatorname{Re}}
\renewcommand{\qedsymbol}{$\blacksquare$}
\title
{Finitely generated Ideals in the Nevanlinna class}
\author[A. Hartmann, X. Massaneda, A. Nicolau]{Andreas Hartmann,
Xavier Massaneda, Artur Nicolau}
\address{A.Hartmann:   Universit\'e de Bordeaux\\
 IMB\\ 351 cours de la Lib\'era\-tion\\ 33405 Talence\\ France}
 \email{Andreas.Hartmann@math.u-bordeaux1.fr}  
\address{X.Massaneda: Universitat  de Barcelona\\
Departament de Matem\`a\-tiques i Inform\`atica \& BGS-Math\\
Gran Via 585, 08007-Bar\-ce\-lo\-na\\ Spain}
\email{xavier.massaneda@ub.edu}
\address{A.Nicolau: Universitat Aut\`onoma de Barcelona\\
Departament de Matem\`a\-tiques \\
Edifici C, 08193-Bellaterra\\ Spain}
\email{artur@mat.uab.cat}
\date{\today}
\keywords{ideals, Nevanlinna class, corona theorem, interpolation, harmonic measure, stable rank}
\subjclass[2010]{30E05, 32A35}
\thanks{Second and third authors supported by the Generalitat de Catalunya (grants 2014SGR 289 and 2014SGR 75) and the Spanish Ministerio de Ciencia e Innovaci\'on (projects MTM2014-51834-P and  MTM2014-51824-P)}
\begin{document}
\maketitle

\begin{abstract}
In this paper we investigate finitely generated ideals in the Nevanlinna class. We prove analogues to some known results for the algebra of bounded analytic functions $H^\infty$. We also show that, in contrast to the $H^\infty$ case, the stable rank of the Nevanlinna class is strictly bigger than 1. 
\end{abstract}

\section{Introduction}

The aim of this paper is to investigate analogues for the Nevanlinna class of some known results on finitely generated ideals of the algebra $H^{\infty}$ of bounded analytic functions in the unit disk $\D$, equipped with the supremum norm $\|f\|_{\infty}=\sup\limits_{z\in\D}|f(z)|$. 

Let us begin by recalling these results.
The first one concerns interpolating sequences. A sequence of points $\Lambda=\{\lambda_n\}_{n\in\N}$  in $\D$ is called
\emph{interpolating for} $H^{\infty}$ if for every bounded sequence $\{w_n\}_{n\in\N}$ of complex numbers, there exists a function $f\in H^{\infty}$
such that
$
 f(\lambda_n)=w_n, n\in\N.
$
By a famous result by Carleson \cite{Carl} a sequence $\{\lambda_n\}_n$ is interpolating for $H^{\infty}$ if and only if 
\[
 \inf_{n\in\mathbb N} \prod_{k\neq n}\left|\frac{\lambda_k-\lambda_n}{1-\overline{\lambda_n}\lambda_k}\right|>0 .
\]
A Blaschke product {with simple zeros is called an}
\emph{interpolating Blaschke product} 
{if its zeros are an interpolating sequence}.

The next important result in the context of this paper is Carleson's corona theorem:  every family $\{f_1,\ldots,f_m\}$ of functions in $H^{\infty}$  satisfying
\[
 \inf_{z\in \D}\sum_{i=1}^m |f_i(z)| >0
\]
generates the whole algebra.  {See \cite{Garn} or \cite{NikTr}.} More generally, we denote by $I_{H^\infty}(f_1,\ldots, f_m)$ the ideal generated by the 
functions $f_1,\ldots,f_m$ in $H^{\infty}$.
The general structure of these ideals is not well understood (see the references  {\cite{Bourgain}, \cite{DGM}, \cite{GIM}-\cite{GMN}, \cite{Mort1}, \cite{Mort2}, \cite{Tol83}, \cite{Tol88}} for more information).
As it turns out, in certain situations the ideals can be characterized by
growth conditions. More precisely, the following ideals have been studied:
\[ 
 J_{H^\infty}(f_1,\ldots,f_m)=\bigl\{f\in H^{\infty}:\, \exists c= c(f)>0\, ,\, |f(z)|\le c\sum_{i=1}^m |f_i(z)|\, ,\, z\in \D\bigr\}.
 \]
It is obvious that $I_{H^\infty}(f_1,\ldots,f_m)\subset J_{H^\infty}(f_1,\ldots,f_m)$. This leads us to the third circle of results we are interested in here.
Tolokonnikov \cite{Tol83} proved that the following conditions are equivalent:
\begin{itemize}
\item [(a)] $ J_{H^\infty}(f_1,\ldots,f_m)$ contains an interpolating Blaschke product, 
\item [(b)] $I_{H^\infty} (f_1,\ldots,f_m)$ contains an interpolating Blaschke product, 
\item [(c)] $\inf\limits_{z\in\D} \sum_{i=1}^m (|f_i(z)|+(1-|z|^2)|f_i'(z)|)>0$.
\end{itemize}


As it turns out, in the special situation of two generators with no common zeros
these conditions are equivalent to $ I_{H^\infty}(f_1,f_2)=J_{H^\infty}(f_1,f_2)$.
In the case of two generators $f_1$ and $f_2$ with common zeros, we have 
$I(f_1,f_2) = J(f_1,f_2)$ if and only if $I(f_1,f_2)$ contains a 
function of the form $BC$ where $B$ is an interpolating Blaschke product and $C$ is the Blaschke product formed with the common zeros of $f_1$ and $f_2$  (see \cite{GMN}).

Let us now turn to the framework we want to discuss
in this paper. We are interested in analogues of the above results
for the \emph{Nevanlinna class} $N$, consisting of the holomorphic functions $f$ on $\D$ such that
$\log_+|f|$ has a positive harmonic majorant on $\D$. Equivalently, $f\in N$ if and only if $f$ is holomorphic on $\D$ and 
\[
 \lim_{r\to 1^-} \int_{\partial\D} \log_+|f(r\zeta)| d\sigma(\zeta)<\infty\ .
\]
Here $d\sigma$ denotes the normalized Lebesgue measure on the unit circle. 

As a general rule we shall see that the results for $H^\infty$ translate to the Nevanlinna setting provided that the boundedness of the elements described above is replaced by a control given by a positive harmonic majorant (or minorant). Let $\Har_+(\D)$ be the cone of positive harmonic functions in the unit disk $\D$. Recall that any $H\in \Har_+(\D)$ is the Poisson integral of a positive measure $\mu$ on the unit circle, that is 
\[
 H(z)=P[\mu](z)=\int_{\partial\D} P(z,\zeta) d\mu(\zeta),
\]
where
\[
 P(z,\zeta)=\Re\left(\frac{\zeta+z}{\zeta-z}\right)=\frac{1-|z|^2}{|\zeta-z|^2}
\]
is the Poisson kernel in $\D$.

It is a standard fact that functions $f$ in the Nevanlinna class admit non-tangential limits $f^*$ at almost every point of the circle. 
It is also well-known that any $f\in N$ can be factored as
$
 f=BSE,
$
where $B$ is a Blaschke product containing the zeros of $f$, $S$ is a singular inner function and $E$ is the outer function:
\[
 E(z)= C\exp\left\{\int_{\partial\D} \frac{\zeta+z}{\zeta-z} \log|f^*(\zeta)| d\sigma(\zeta)\right\},
\]
where $|C|=1$. In particular
\[
 \log |E(z)|= P[\log|f^*|](z),\quad z\in\D\ .
\]
A function $S$ is singular inner if there exists a positive measure $\mu$ on $\partial\D$ singular with respect to the Lebesgue measure such that
\[
 S(z)=\exp\left\{- \int_{\partial\D} \frac{\zeta+z}{\zeta-z} d\mu(\zeta) \right\},\quad z\in\D\ .
\]

For the Nevanlinna class R. Mortini observed that a well known result of T. Wolff implies the following corona theorem (see \cite{Mort0} or  \cite{Mart}).

\begin{CoronaN}[R. Mortini]
Let $I(f_1,\ldots,f_m)$ denote the ideal generated in $N$ by a given family of functions 
$f_1,\ldots, f_m\in N$. Then $I(f_1,\ldots,f_m)=N$ if and only if there exists $H\in \Har_+(\D)$
such that
\[
 \sum_{i=1}^m |f_i(z)|\ge e^{-H(z)},\quad z\in \D.
\]
\end{CoronaN}

We need to define the ideal corresponding to $J_{H^{\infty}}$ in $N$.
This will be done in the following way:
\[ 
 J(f_1,\ldots,f_m)=\bigl\{f\in N:\exists H=H(f)\in \Har_+(\D)\, ,\, |f(z)|\le e^{H(z)}\sum_{i=1}^m |f_i(z)|\, ,\, z\in \D\bigr\}.
\]
It is clear that $I(f_1,\ldots,f_m)\subset J(f_1,\ldots,f_m)$. Let us also mention that, by the  previous corona theorem, in the case
when $J(f_1,\ldots,f_m)=N$, then $I(f_1,\ldots,f_m)=N$. 

Recall that a sequence space is called {\it ideal} if it is stable with respect
to pointwise multiplication by bounded sequences. For the following definition see also \cite{HMNT}.
\begin{definition*}
A sequence of points $\Lambda=\{\lambda_n\}_n$ in $\D$ is called \emph{interpolating for} $N$ (denoted $\Lambda\in\Int N$) if the trace space $N|\Lambda$ is ideal.
\end{definition*}
Equivalently, $\Lambda\in \Int N$ if for every bounded sequence $\{v_n \}_n$ of complex numbers there exists $f\in N$ such that 
\[
 f(\lambda_n)=v_n, \quad n\in\N.
\]
Interpolating sequences for the Nevanlinna class were first investigated by Naftalevitch \cite{Naft}
starting from an {\it a priori} fixed target space which forces interpolating sequences to be
confined in a finite union of Stolz angles. 

A rather complete study, based on the above definition, was carried out much later in \cite{HMNT}. In particular, 
it was proved that a sequence $\{\lambda_n\}_n$ is interpolating for $N$ if and only if there exists a  positive harmonic function $H \in \Har_+(\D)$ such that 
\bea\label{NIS}
 \prod_{k:k\neq n}\left|\frac{\lambda_k-\lambda_n}{1-\overline{\lambda_n}\lambda_k}\right| \geq e^{- H(\lambda_n)} \, , n \in \N . 
\eea
Moreover, it was also shown that if $\Lambda\in \Int N$, then the trace space is given by
\bea\label{TrSpace}
 N|\Lambda=\left\{\{w_n\}_n:\exists H\in \Har_+(\D)\ ,\, \log_+|w_n| \le H(\lambda_n)\right\}.
\eea
It was also noticed that in the previous condition only the factors corresponding to $\lambda_k$ close to $\lambda_n$ are relevant. More precisely, fixed any $c\in (0,1)$, the condition
\begin{equation}\label{interpol-local}
\prod\limits_{\stackrel{k:k\neq n}{\rho(\lambda_k,\lambda_n)\leq c}} \left |\frac{\lambda_k-\lambda_n}{1-\overline{\lambda_n}\lambda_k}\right|\geq e^{-H(\lambda_n)},\quad n\in\N
\end{equation}
is sufficient for $\Lambda$ to be interpolating (see \cite[Proposition 4.1]{HMNT}).

A Blaschke product the zeros of which forms an interpolating sequence for the Nevanlinna class is
called a \emph{Nevanlinna interpolating Blaschke product}.

The analogues of the results mentioned above in the context of $H^\infty$ 
read as follows.

\begin{theorem}\label{MainThm}
Let $f_1,\ldots,f_m$ be functions in $N$. Then the following conditions are equivalent:
\begin{itemize}
\item  [(a)] $I(f_1,\ldots,f_m)$ contains a Nevanlinna interpolating Blaschke product,  
\item  [(b)]$J(f_1,\ldots,f_m)$ contains a Nevanlinna interpolating Blaschke product, 
\item  [(c)] There exists a function $H\in \Har_+(\D)$ such that 
\[
\sum_{i=1}^m (|f_i(z)|+(1-|z|^2)|f_i'(z)|)\ge e^{-H(z)}\, , \ z\in \D.
\]
\end{itemize}
In case $m=2$, if $f_1$ and $f_2$ have no common zeros, the above conditions are equivalent to 

\begin{itemize}
\item [(d)] $I(f_1,f_2)=J(f_1,f_2)$.
\end{itemize}
\end{theorem}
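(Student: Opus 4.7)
The plan is to establish the cycle $(a)\Rightarrow(b)\Rightarrow(c)\Rightarrow(a)$ and then handle the equivalence $(a)\Leftrightarrow(d)$ in the two-generator case with no common zeros. The step $(a)\Rightarrow(b)$ is immediate from $I(f_1,\ldots,f_m)\subset J(f_1,\ldots,f_m)$.

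For $(b)\Rightarrow(c)$, the key ingredient is a pointwise lower bound for any Nevanlinna interpolating Blaschke product $B$ with zero set $\Lambda$: there should exist $H_1\in\Har_+(\D)$ with $|B(z)|\ge \rho(z,\Lambda)\,e^{-H_1(z)}$ on $\D$. This can be extracted from \eqref{NIS} and \eqref{interpol-local} by applying Harnack's inequality to the positive harmonic function $z\mapsto \sum_{k\ne n}\log(1/|\phi_{\lambda_k}(z)|)$ on a pseudohyperbolic disk around $\lambda_n$ that avoids $\Lambda\setminus\{\lambda_n\}$. Combined with $|B|\le e^{H_0}\sum_i|f_i|$ coming from $B\in J$, condition $(c)$ is immediate on the set $\{z:\rho(z,\Lambda)\ge 1/2\}$. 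For $z$ pseudohyperbolically close to some $\lambda_n$, a Cauchy estimate for $B$ on the circle $\rho(w,\lambda_n)=1/2$ together with $(1-|\lambda_n|^2)|B'(\lambda_n)|\ge e^{-H_1(\lambda_n)}$ produces a point $w_0$ near $\lambda_n$ with $\sum_i|f_i(w_0)|\gtrsim e^{-H(w_0)}$; a two-term Taylor expansion of each $f_i$ at $\lambda_n$, with quadratic remainder controlled by Cauchy estimates and the pointwise Nevanlinna bound $|f_i|\le e^{\tilde H}$ on a slightly larger pseudohyperbolic disk, transfers this to $\sum_i(|f_i(\lambda_n)|+(1-|\lambda_n|^2)|f_i'(\lambda_n)|)\gtrsim e^{-H^*(\lambda_n)}$, after which Harnack propagates $(c)$ to every $z$ in the half-disk.

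The implication $(c)\Rightarrow(a)$ is the central construction. Starting from $\sum_i(|f_i(z)|+(1-|z|^2)|f_i'(z)|)\ge e^{-H(z)}$, build a Nevanlinna interpolating sequence $\Lambda$ tied to the small-value set of $\sum_i|f_i|$: on each connected component of $\{z:\sum_i|f_i(z)|<\epsilon\,e^{-2H(z)}\}$ the derivative term in $(c)$ must dominate, which forces bounded pseudohyperbolic diameter of the component, and selecting one representative from each component yields a separated sequence certified as Nevanlinna interpolating by \eqref{interpol-local}. Let $B$ be the Blaschke product on $\Lambda$. To show $B\in I(f_1,\ldots,f_m)$, cover $\D$ by the sets $U_i=\{|f_i|\ge (1/m)\sum_j|f_j|\}$, take a smooth partition of unity $\{\chi_i\}$ subordinate to this covering with Carleson-type gradient estimates, set $h_i^0=\chi_i B/f_i$, and obtain holomorphic $g_i=h_i^0-b_i$ by solving $\bar\partial b_i=\bar\partial h_i^0$ with Nevanlinna growth, so that $\sum_i f_i g_i=B$. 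The main obstacle is controlling the Nevanlinna size of the $\bar\partial$-correction $b_i$; this relies on the $\bar\partial$-machinery underlying the corona theorem for $N$ recalled above, which provides a solution dominated by $e^{H^{**}}$ when the data form a Carleson-type measure of mass bounded by the exponential of a positive harmonic function.

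Finally, for $(a)\Leftrightarrow(d)$ with $m=2$ and $f_1,f_2$ having no common zeros: $(a)\Rightarrow(d)$ is a B\'ezout-type argument. Given $B=f_1g_1+f_2g_2\in I(f_1,f_2)$ with $B$ a Nevanlinna interpolating Blaschke product and $g_i\in N$, and any $f\in J(f_1,f_2)$ with $|f|\le e^{H_0}(|f_1|+|f_2|)$, at each zero $\lambda_n$ of $B$ at least one $f_i(\lambda_n)\ne 0$, so the quotient $fg_i/B$ is holomorphic on $\D$; the lower bound $|B|\ge \rho(\cdot,\Lambda)e^{-H_1}$ combined with the $J$-growth of $f$ and with $g_i\in N$ places $\log^+|fg_i/B|$ under a positive harmonic majorant, so $fg_i/B\in N$ and $f=f_1(fg_1/B)+f_2(fg_2/B)\in I(f_1,f_2)$. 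The reverse implication $(d)\Rightarrow(a)$ is expected to be the hardest step: the plan is to exhibit explicitly an element of $J(f_1,f_2)$ whose inner factor must contain a Nevanlinna interpolating Blaschke product built from dropouts of $|f_1|+|f_2|$ in the spirit of $(c)\Rightarrow(a)$, and then exploit $I=J$ together with the corona theorem for $N$ to extract a genuine such Blaschke product inside $I(f_1,f_2)$ itself.
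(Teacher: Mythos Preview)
Your argument has genuine gaps in $(c)\Rightarrow(a)$ and in $(a)\Rightarrow(d)$, and leaves $(d)\Rightarrow(a)$ essentially open.

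For $(c)\Rightarrow(a)$, the Blaschke product $B$ on your chosen representatives $\lambda_n$ need not lie in $I(f_1,\ldots,f_m)$ at all. Condition $(c)$ allows the $f_i$ to share a common zero $\zeta$ (the derivative term compensating there); every element of $I$ vanishes at $\zeta$, but $\zeta$ need not be among your $\lambda_n$. Even in the absence of common zeros, the $\bar\partial$ estimates you invoke are not evident: near $\lambda_n$ one has $\sum_i|f_i|$ extremely small while $|B|\sim\rho(z,\lambda_n)e^{-H_1}$, and any corona-type scheme producing $B=\sum f_ig_i$ generates data of order $|B|\cdot(1-|z|^2)\sum|f_i'|/(\sum|f_j|)^3$, which is not controlled in a Carleson sense near $\Lambda$. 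The paper proceeds differently. After reducing to the case where the $f_i$ are Blaschke products, it perturbs each $f_i$ to $g_i=f_i-e^{-12D(H+i\tilde H)}$, uses Rouch\'e to locate a unique zero $a_n^{(i)}$ of $g_i$ near each $\lambda_n$, lets $I_i$ be the (Nevanlinna interpolating, by a stability lemma) Blaschke product on $\{a_n^{(i)}\}_n$, verifies $\sum_i|g_i/I_i|\ge e^{-C'H}$ with $C'$ \emph{independent of} $D$, applies the Corona theorem for $N$ to get $\sum_i(g_i/I_i)h_i=1$ with $\sum|h_i|\le e^{M_0H}$, and then shows by a second Rouch\'e argument that the explicit element $F:=\sum_i f_i\,h_i\,(\prod_kI_k)/I_i\in I(f_1,\ldots,f_m)$ has a Nevanlinna interpolating zero set. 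The $\bar\partial$ machinery is used only inside the black-box Corona theorem, never to force a preassigned $B$ into $I$.

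For $(a)\Rightarrow(d)$, the quotients $fg_i/B$ need not be holomorphic: from $B=f_1g_1+f_2g_2$ and $B(\lambda_n)=0$ you only get $f_1g_1(\lambda_n)=-f_2g_2(\lambda_n)$, which says nothing about $f(\lambda_n)g_i(\lambda_n)$. The correct argument (valid for every $m$) uses interpolation on $\Lambda$: since $|f(\lambda_n)\overline{f_i(\lambda_n)}|/\sum_j|f_j(\lambda_n)|^2\le m\,e^{H(\lambda_n)}$, the trace-space description \eqref{TrSpace} gives $h_i\in N$ with $h_i(\lambda_n)=f(\lambda_n)\overline{f_i(\lambda_n)}/\sum_j|f_j(\lambda_n)|^2$; then $\sum_i f_ih_i-f$ vanishes on $\Lambda$, hence equals $BG$ for some $G\in N$, and $B\in I$ gives $f\in I$. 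Finally, your plan for $(d)\Rightarrow(a)$ does not touch the real content: the paper's proof occupies all of Section~4 and is a delicate case analysis on Whitney boxes, governed by how many zeros of each $B_i$ fall in disks $D(\alpha_j,k(\alpha_j)^{1/10})$ and $D(\alpha_j,k(\alpha_j)^{1/100})$ around the minima $\alpha_j$ of $k(z)=\sum_i(|f_i|+(1-|z|^2)|f_i'|)$; it hinges on auxiliary lemmas about splitting Blaschke products into two balanced factors, about passing the hypothesis $I=J$ to subproducts (no common zeros), and about the trace of $N$ on a union of two Nevanlinna interpolating sequences via first-order divided differences.
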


As in $H^{\infty}$ each of the conditions (a)-(c) implies $I(f_1,\ldots, f_m)=J(f_1,\ldots,f_m)$. 
However, when $m\ge 3$, the converse fails, as will be explained after the proof of the result. Also, like in the $H^{\infty}$-situation, if the two generators $f_1$ and $f_2$ have common zeros, then $I(f_1,f_2)=J(f_1,f_2)$ if and only if $I(f_1,f_2)$ contains a function of the form $BC$ where $B$ is a Nevanlinna interpolating Blaschke product and $C$ is the Blaschke product formed with the common zeros of $f_1$ and $f_2$. 

Our proof of Theorem \ref{MainThm}
uses some of the ideas from both \cite{Tol83} and \cite{GMN}, but also some specific 
properties of the Nevanlinna class. In particular we will make use of a new description
of Nevanlinna interpolating sequences in terms of harmonic measure, which we discuss now.

Denote by
\[
 \rho(z,w)=\left|\frac{z-w}{1-\bar z w}\right|
\]
the pseudohyperbolic distance in $\D$, and by $D(z,r)=\{w\in\D : \rho(z,w)<r\}$ the corresponding disk of center $z$ and radius $r\in(0,1)$. Let $B$ denote the Blaschke product with zeros $\Lambda =\{\lambda_n \}_n$ and let 
\[
 b_{\lambda_n}(z)=\frac{\overline{\lambda}_n}{|\lambda_n|}
 \frac{\lambda_n-z}{1-\bar\lambda_n z}\ ,\quad B_n(z)=\frac{B(z)}{b_{\lambda_n}(z)}\ .
\]
In these terms $B(z)=\prod_n b_{\lambda_n}(z)$ and $|b_{\lambda_n}(z)|=\rho(z,\lambda_n)$. Given $H\in\Har_+(\D)$, consider the disks 
$\mD_n^H=D(\lambda_n, e^{-H(\lambda_n)})$ and the domain
\[
 \Omega_n^H=\D\setminus\bigcup_{\stackrel{k:k\neq n}{\rho(\lambda_k,\lambda_n)\le 1/2}}
  \mD_k^H.
\]
It will be clear from the proof of Theorem~\ref{IntThm} below that the choice of the constant 1/2 in the definition of $\Omega_n^H$ is of no relevance; it can be replaced by any $c\in (0,1)$. Let $\omega(z, E,\Omega)$ denote the harmonic measure at $z\in\Omega$ of
the set $E\subset \partial \Omega$ in the domain $\Omega$.
The following result collects several new descriptions of Nevanlinna interpolating sequences which will be used in the proof of Theorem \ref{MainThm}.

\begin{theorem}\label{IntThm}
Let $\Lambda=\{\lambda_n\}_n$ be a Blaschke sequence of distinct points in $\D$ and let $B$ be the Blaschke
product with zero set $\Lambda$. The following statements are equivalent:
\begin{itemize}
\item [(a)] $\Lambda$ is an interpolating sequence for $N$, that is, there exists $ H\in \Har_+(\D)$ such that
\begin{equation*}\label{interpolation}
 (1-|\lambda_n|^2)|B'(\lambda_n)|=|B_n(\lambda_n)|\ge e^{-H(\lambda_n)},\quad n\in\N.
\end{equation*}
\item [(b)]  There exists $ H\in \Har_+(\D)$ such that $|B(z)|\ge e^{-H(z)}\rho(z,\Lambda)$, $z\in\D$,
\item [(c)] There exists $ H\in \Har_+(\D)$ such that $|B(z)|+(1-|z|^2)|B'(z)|\ge e^{-H(z)}$, $z\in\D$,
\item [(d)] There exists $ H\in \Har_+(\D)$ such that the disks $ \mD_k^H$ 
are pairwise disjoint, and 
\[
 \inf_{n\in\mathbb N} \omega(\lambda_n,\partial\D,\Omega_n^H)>0.
\]
\end{itemize}
\end{theorem}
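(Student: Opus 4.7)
We close a cycle (c)$\Rightarrow$(a)$\Rightarrow$(b)$\Rightarrow$(c) by standard complex-analytic means, and prove the new equivalence (a)$\Leftrightarrow$(d) via one harmonic-measure comparison used in both directions. The implication (c)$\Rightarrow$(a) is immediate on evaluating (c) at $z=\lambda_n$, using $B(\lambda_n)=0$ and $(1-|\lambda_n|^2)|B'(\lambda_n)|=|B_n(\lambda_n)|$.

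For (a)$\Rightarrow$(b), notice that $|B(z)|/\rho(z,\Lambda)=|B_n(z)|$ when $\lambda_n$ is a closest point of $\Lambda$ to $z$, so it suffices to bound $|B_n(z)|$ from below by $e^{-H'(z)}$ on the pseudohyperbolic Voronoi cell of $\lambda_n$. There, Harnack's inequality makes $H$ comparable to $H(\lambda_n)$ on $D(\lambda_n,1/2)$, and the pseudohyperbolic triangle inequality gives $\rho(z,\lambda_k)\asymp\rho(\lambda_n,\lambda_k)$ for $k\ne n$ not much closer to $\lambda_n$ than $z$; combining with the local description (\ref{interpol-local}) of (a) produces $-\log|B_n(z)|\lesssim H(\lambda_n)\asymp H(z)$. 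For (b)$\Rightarrow$(c), split on $\rho(z,\Lambda)$: when $\rho(z,\Lambda)\ge 1/2$, condition (b) already gives $|B(z)|\ge\tfrac{1}{2}e^{-H(z)}$; when $\rho(z,\Lambda)<1/2$, there is a unique closest $\lambda_n$, and (b) forces $|B_n(z)|\ge e^{-H(z)}$, while the Schwarz--Pick identity $(1-|z|^2)|b'_{\lambda_n}(z)|=(1-|\lambda_n|^2)/|1-\bar\lambda_n z|^2$ is bounded below on $D(\lambda_n,1/2)$, so the product rule applied to $B=b_{\lambda_n}B_n$ supplies the required lower bound for $(1-|z|^2)|B'(z)|$.

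The key equivalence (a)$\Leftrightarrow$(d) rests on the single-hole identity
\[
\omega(z,\partial\mD_k^H,\D\setminus\mD_k^H)=\frac{-\log|b_{\lambda_k}(z)|}{H(\lambda_k)},
\]
valid because both sides are bounded harmonic on $\D\setminus\mD_k^H$ with matching boundary data ($0$ on $\partial\D$ and $1$ on $\partial\mD_k^H$). Since $\Omega_n^H\subset\D\setminus\mD_k^H$, monotonicity of harmonic measure under domain shrinking gives $\omega(z,\partial\mD_k^H,\Omega_n^H)\ge -\log|b_{\lambda_k}(z)|/H(\lambda_k)$ pointwise on $\Omega_n^H$, and summing over the (disjoint) $\mD_k^H$ appearing in the definition of $\Omega_n^H$ yields
\[
1-\omega(\lambda_n,\partial\D,\Omega_n^H)\ge \sum_{k} \frac{-\log\rho(\lambda_n,\lambda_k)}{H(\lambda_k)}.
\]
For (d)$\Rightarrow$(a), the hypothesis bounds the left-hand side above by $1-c$ uniformly in $n$, and Harnack $H(\lambda_k)\asymp H(\lambda_n)$ on $\{\rho(\cdot,\lambda_n)\le 1/2\}$ clears the denominator to give $\sum_k-\log\rho(\lambda_n,\lambda_k)\le C(1-c)H(\lambda_n)$, which is the local condition (\ref{interpol-local}) for a scalar multiple of $H$; hence (a) by \cite[Proposition 4.1]{HMNT}.

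For (a)$\Rightarrow$(d), the complementary bound comes from the super-solution $w(z)=-\log|B_n(z)|$, which is positive and superharmonic on $\Omega_n^H$ with $w=0$ a.e.~on $\partial\D$ and $w\ge H(\lambda_k)$ on each $\partial\mD_k^H$ (since $|B_n|\le|b_{\lambda_k}|=e^{-H(\lambda_k)}$ there). By Harnack, $H(\lambda_k)\ge H(\lambda_n)/C$ for the relevant $k$, so $Cw/H(\lambda_n)\ge 1$ on $\bigcup_k\partial\mD_k^H$, and the maximum principle for superharmonic functions yields $\omega(\lambda_n,\bigcup_k\partial\mD_k^H,\Omega_n^H)\le Cw(\lambda_n)/H(\lambda_n)\le C$ by (a). Since $C\ge 1$ a priori, this is not yet strictly less than $1$, and the main technical obstacle is to upgrade this bound. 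The fix is to replace $H$ by $MH$ for large $M$, after first substituting $H+\log 3$ for $H$ to guarantee disjointness of the $\mD_k^H$ (using $\rho(\lambda_k,\lambda_j)\ge\max(e^{-H(\lambda_k)},e^{-H(\lambda_j)})$, itself a consequence of (a)). Rerunning the argument with $MH$ produces the bound $C/M<1$ for $M>C$, which gives (d). This rescaling is legitimate because each of the conditions (a)--(d) is stable under $H\mapsto aH+b$ with $a,b\ge 0$.
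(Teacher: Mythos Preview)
Your harmonic-measure monotonicity in the (d)$\Rightarrow$(a) step goes the wrong way. When $\Omega_n^H\subset\D\setminus\mD_k^H$ and $E=\partial\mD_k^H$ lies in both boundaries, the single-hole function $u(z)=\omega(z,E,\D\setminus\mD_k^H)$ is harmonic on $\Omega_n^H$ with boundary values $1$ on $E$ and $\ge 0$ elsewhere on $\partial\Omega_n^H$, while $\omega(\cdot,E,\Omega_n^H)$ has boundary values $1$ on $E$ and $0$ elsewhere. The maximum principle therefore gives
\[
\omega(z,\partial\mD_k^H,\Omega_n^H)\ \le\ \omega(z,\partial\mD_k^H,\D\setminus\mD_k^H)=\frac{-\log|b_{\lambda_k}(z)|}{H(\lambda_k)},
\]
not $\ge$. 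With the correct direction, summing over $k$ and using Harnack yields (a)$\Rightarrow$(d) directly --- this is exactly the paper's argument, and your supersolution proof via $w=-\log|B_n|$ is a repackaging of the same estimate. But the correct inequality gives nothing for (d)$\Rightarrow$(a): an upper bound on $1-\omega(\lambda_n,\partial\D,\Omega_n^H)$ combined with an \emph{upper} bound on $\sum_k(-\log\rho(\lambda_n,\lambda_k))/H(\lambda_k)$ has no content. The paper's proof of (d)$\Rightarrow$(a) is substantially harder than anything in your outline: one needs a \emph{lower} bound on the individual harmonic measures, obtained by passing to smaller disks $\Delta_k=D(\lambda_k,e^{-NH(\lambda_k)})\subset\mD_k^H$, applying Green's formula to $u_n=\omega(\cdot,\partial\D,\mathcal V_n)$ against $\log(1/\rho(\cdot,\lambda_n))$ on the domain $\mathcal V_n=\D\setminus\cup\Delta_k$, and then proving the normal-derivative estimate $\int_{\partial\Delta_k}\partial u_n/\partial n\,d\sigma\gtrsim 1/H(\lambda_n)$ via a barrier comparison $u_n\ge(\varepsilon'/2)\,\omega(\cdot,\partial\mD_k,\mD_k\setminus\Delta_k)$ on each annulus. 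None of this is reachable from single-hole monotonicity.

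There is a second, smaller gap in your (b)$\Rightarrow$(c). With the threshold $\rho(z,\Lambda)<1/2$, the product rule on $B=b_{\lambda_n}B_n$ gives
\[
(1-|z|^2)|B'(z)|\ \ge\ (1-\rho(z,\lambda_n)^2)\,|B_n(z)|-\rho(z,\lambda_n)\,(1-|z|^2)|B_n'(z)|,
\]
and the subtracted term can be as large as $1/2$ (Schwarz--Pick gives only $(1-|z|^2)|B_n'(z)|\le 1$), while the first term is only $\ge(3/4)e^{-H(z)}$; the right-hand side is typically negative. The paper fixes this by splitting at the much finer threshold $\rho(z,\Lambda)\le e^{-10H(z)}$: then $(1-|\lambda_n|^2)|B'(\lambda_n)|\ge e^{-H(\lambda_n)}$ follows from (b) by a limit, and this is transferred to $z$ via the Lipschitz bound $|(1-|z|^2)B'(z)-(1-|\lambda_n|^2)B'(\lambda_n)|\le 6\rho(z,\lambda_n)$, which is now dominated by $e^{-8H(z)}$ and hence negligible.
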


Statement (d) and its proof are modelled after the corresponding version for $H^{\infty}$, proved in \cite{GGJ}. {Descriptions of interpolating and sampling sequences in Bergman spaces in terms of harmonic measure can be found in \cite{OS}.}  It will be clear from the proof that (d) can be replaced by a seemingly stronger statement: for every $\varepsilon\in(0,1)$ there exists $ H\in \Har_+(\D)$ such that the disks $ \mD_k^H$ 
are pairwise disjoint, and 
\[
 \inf_{n\in\mathbb N} \omega(\lambda_n,\partial\D,\Omega_n^H)\geq 1-\varepsilon.
\]

The paper is organized as follows. In the next section we shall prove Theorem \ref{IntThm}
and some Corollaries which will be used later.  Section
\ref{S3} is devoted to the the equivalence of the statements (a), (b) and (c) of Theorem \ref{MainThm}  and Section \ref{S3b} to condition (d)
in  the case $m=2$. At the end of Section 4 it is also explained that when $m>2$ then condition (d) does not imply any of the previous ones. The last Section is devoted to present two related open problems. The first one concerns the stable rank of $N$ and the second is a version of the well known $f^2$ problem of T. Wolff (see \cite[p. 319]{Garn}), solved by S. Treil in the context of $H^{\infty}$ \cite{Tr2}.

A final word about notation. Throughout the paper  $A\lesssim B$ will mean that there is an absolute constant $C$ such that $A \le CB$, and we write $A\asymp B$  if both $A\lesssim B$ and $B\lesssim A$.

It is a pleasure to thank Raymond Mortini for drawing our attention to the Corona Theorem in the Nevanlinna class and to his paper \cite{Mort0}. 


\section{Interpolating sequences in the Nevanlinna class}\label{S2}

We start with an elementary lemma.

\begin{lemma}\label{bounded-f}
Let $f\in H^{\infty}$ with $\|f\|_{\infty} = \sup\limits_{z \in \D} |f(z)| \le 1$.
\begin{itemize}

\item [(a)] For all $z,\lambda\in \D$,
\[
 |f(w)-f(\lambda)|\leq 2 \rho(w,\lambda)\ .
\]

\item [(b)] Fix $0<\delta <1/5$. 
If $|f(z)|\le \delta^4$ and $(1-|z|^2)|f'(z)|\ge \delta$ for a fixed $z\in \D$, then
$|f(w)|\ge \delta^4$ if $\rho(z,w)=\delta^2$.

\item [(c)] If $\rho(z,w)\le 1/2$, then
\[
 \big|(1-|z|^2)f'(z)-(1-|w|^2)f'(w)\big|\le 6\, \rho(z,w).
\]
\end{itemize}
\end{lemma}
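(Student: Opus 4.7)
Part (a) is immediate from the Schwarz--Pick lemma, which gives $\rho(f(w),f(\lambda))\le\rho(w,\lambda)$. Combining this with the identity $|f(w)-f(\lambda)|=|1-\overline{f(\lambda)}f(w)|\cdot\rho(f(w),f(\lambda))$ and the trivial estimate $|1-\overline{f(\lambda)}f(w)|\le 2$ yields the bound.

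For (b), the plan is to move $z$ to the origin. Setting $\varphi_z(u)=(z-u)/(1-\bar z u)$ and $g=f\circ\varphi_z$, one has $\|g\|_\infty\le 1$, $g(0)=f(z)$, and $|g'(0)|=(1-|z|^2)|f'(z)|\ge\delta$. Write the Taylor expansion $g(u)=g(0)+g'(0)u+R(u)$; Schwarz's lemma applied to the holomorphic function $R(u)/u^2$ (which is bounded by an absolute constant on $\D$) gives $|R(u)|\le C|u|^2$. For $|u|=\delta^2$ this yields
\[
|g(u)|\ge |g'(0)||u|-|g(0)|-|R(u)|\ge \delta^3-\delta^4-C\delta^4\ge\delta^4
\]
when $\delta<1/5$. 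Since $\rho(z,w)=|\varphi_z(w)|$, the claim follows by choosing $u=\varphi_z(w)$.

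The main work is in (c). Keeping $g=f\circ\varphi_z$ and setting $v=\varphi_z(w)$, so that $|v|=\rho(z,w)\le 1/2$, I would first derive the identities
\[
(1-|z|^2)f'(z)=-g'(0),\qquad (1-|w|^2)f'(w)=-g'(v)(1-|v|^2)\omega^2,
\]
where $\omega=(1-\bar z v)/|1-\bar z v|$. The first is immediate from $\varphi_z'(0)=-(1-|z|^2)$; the second follows from $\varphi_z'(v)=-(1-|z|^2)/(1-\bar z v)^2$ together with the standard identity $1-|w|^2=(1-|z|^2)(1-|v|^2)/|1-\bar z v|^2$. Subtracting gives
\[
(1-|z|^2)f'(z)-(1-|w|^2)f'(w)=[g'(v)-g'(0)]+g'(v)\bigl[(1-|v|^2)\omega^2-1\bigr].
\]
The first bracket is controlled by Cauchy's estimates applied to $g'$: the bound $|g''|\le 2/(1-|\zeta|)^2$ on $\D$, integrated along $[0,v]$, gives $|g'(v)-g'(0)|\lesssim |v|$ on $|v|\le 1/2$. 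For the second, $|g'(v)|\le 1/(1-|v|^2)$ is bounded, and a direct computation shows $|(1-|v|^2)\omega^2-1|\le |v|^2+|1-\omega^2|\lesssim |v|$, using that $|\arg(1-\bar z v)|\le |v|/(1-|v|)$. Careful tracking of the numerical constants delivers the stated estimate with $6$.

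The main obstacle is the constant-tracking in part (c). The function $(1-|\zeta|^2)f'(\zeta)$ is not holomorphic, so Schwarz--Pick cannot be applied to it directly, and the phase factor $\omega^2$ is genuinely present and must be estimated by hand. Parts (a) and (b) are routine consequences of Schwarz--Pick and Schwarz's lemma respectively.
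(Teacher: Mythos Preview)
Your arguments for (a) and (b) coincide with the paper's: Schwarz--Pick for (a), and for (b) the reduction to $z=0$ via $g=f\circ\varphi_z$ followed by a second-order Taylor expansion. One small point in (b): you cannot leave the remainder constant as an unspecified $C$, because the final inequality $\delta^3-\delta^4-C\delta^4\ge\delta^4$ with $\delta<1/5$ forces $C\le 3$. The paper obtains $C=3$ explicitly from $|g|\le 1$, $|g(0)|\le 1$, $|g'(0)|\le 1$, and you should state this.

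In part (c) your route is genuinely different from the paper's, and in one respect more careful. The paper reduces to $z=0$ by asserting the invariance $\tilde\nabla(f\circ\phi_z)(\zeta)=(\tilde\nabla f)(\phi_z(\zeta))$; but this identity is only true up to a unimodular factor (one computes $\tilde\nabla(f\circ\phi_z)(\zeta)=-\frac{|1-\bar z\zeta|^2}{(1-\bar z\zeta)^2}(\tilde\nabla f)(\phi_z(\zeta))$), so strictly speaking the paper's reduction is incomplete. You correctly isolate this phase $\omega^2=(1-\bar z v)/(1-z\bar v)$ and carry it through the estimate.

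The one unsubstantiated claim in your sketch is that ``careful tracking of the numerical constants delivers the stated estimate with~$6$''. With your decomposition and the bounds you indicate, one gets
\[
|g'(v)-g'(0)|\le\tfrac{14}{3}|v|,\qquad |g'(v)|\,\bigl|(1-|v|^2)\omega^2-1\bigr|\le \tfrac{|v|^2}{1-|v|^2}+|1-\omega^2|\le \tfrac{2}{3}|v|+4|v|,
\]
using $|1-\omega^2|=2|\operatorname{Im}(z\bar v)|/|1-z\bar v|\le 2|v|/(1-|v|)$. This totals $\tfrac{28}{3}|v|$, not $6|v|$; even optimising the split one does not seem to reach $6$ once the phase is present. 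Since every application of (c) in the paper only uses that \emph{some} absolute constant works, this does not matter downstream, but you should either produce the sharper arithmetic or state the lemma with a larger constant.
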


\begin{proof}[Proof of the Lemma \ref{bounded-f}.]

(a) This is a direct consequence of Schwarz' Lemma:
\[
 \rho(f(w),f(\lambda))\leq \rho(w,\lambda)\qquad z,\lambda\in\D .
\]

(b) Assume first that $z=0$ and write $ f(w)=f(0)+f'(0)w+w^2g(w)$.
Since $\|f\|_{\infty}\le 1$ and $|f'(0)| \leq 1$, 
we have $|g(w)|\le 3$ for every $w \in \D$, and hence, 
\[
 |f(w)|\geq |f'(0)||w|-|f(0)|-3|w|^2\ , w \in \D . 
\]
Since $\delta\leq 1/5$, then for $|w|=\delta^2$ we have 
$
 |f(w)|\geq \delta^3-\delta^4-3\delta^4\geq \delta^4,
$
as desired.

For arbitrary $z\in \D$ we apply the previous argument to the function $f\circ \phi_z$, where 
\[
 \phi_z(w)=\frac{z-w}{1-\bar z w}
\]
is the holomorphic automorphism of $\D$ exchanging $0$ and $z$. 
Since $|(f\circ \phi_z)'(0)|=(1-|z|^2)|f'(z)|\geq \delta$ and $|(f\circ \phi_z)(0)|=|f(z)|\le\delta^4$, 
taking $\zeta \in \D$ such that $w= \phi_z (\zeta)$, we get  $|f(w)|=|(f\circ \phi_z)(\zeta)|\geq\delta^4$ if  $\rho(z,w)=|\zeta|=\delta^2$.

\bigskip

(c) Again, assume first that $z=0$. If $|\zeta|\leq 1/2$ then
\begin{align*}
 \left|f'(0)-(1-|\zeta|^2)f'(\zeta)\right|&\leq \left|f'(0)-f'(\zeta)\right| + |\zeta|^2 |f'(\zeta)|
  \leq |f'(0)-f'(\zeta)|+\frac{|\zeta|^2}{1-|\zeta|^2}\ .
\end{align*}
Let $g(\zeta)=f'(\zeta)-f'(0)$. For $|\zeta|\leq 1/2$ we have
\[
 |g(\zeta)|=|f'(\zeta)-f'(0)|\leq\frac 1{1-|\zeta|^2}+1\leq \frac 73 .
\]
Applying (a) to $h(z):=3/7\, g(z/2)$ we deduce that 
\[
 |g(\zeta)|\leq \frac{14}3 |\zeta|,\quad |\zeta|\leq 1/2\ .
\]
Finally, if $|\zeta|\leq 1/2$, from the above estimate we deduce that
\[
  \left|f'(0)-(1-|\zeta|^2)f'(\zeta)\right| \leq \frac{14}3 |\zeta|+\frac{|\zeta|^2}{1-|\zeta|^2}\leq \frac{16}3 |\zeta|\leq 6|\zeta|,
\]
as desired.

For general $z\in\D$  we use the case $z=0$ and the invariance by automorphisms of $\tilde\nabla f(z)=(1-|z|^2)f'(z)$ , that is,  
$\tilde\nabla (f\circ \phi_z)(\zeta)=(\tilde\nabla f)(\phi_z(\zeta))$ for any $\zeta , z \in \D$. Then, for $|\zeta|\leq 1/2$,
\begin{align*}
\left|(1-|z|^2)f'(z)-(1-|\zeta|^2)(f\circ\phi_z)'(\zeta) \right|&=\left|(f\circ\phi_z)'(0)-(1-|\zeta|^2) (f\circ\phi_z)'(\zeta)\right|\leq 6|\zeta|\ .
\end{align*}
Letting $\zeta=\phi_z(w)$ and using the invariance we see that
$(1-|\zeta|^2)(f\circ\phi_z)'(\zeta) = (1-|w|^2)f'(w) $
and the result follows.
\end{proof}

In the proofs we will repeatedly use the well-known \emph{Harnack inequalities}: for $H\in\Har_+(\mathbb D)$ and $z,w\in\mathbb \D$,
\begin{eqnarray}\label{Harnack}
 \frac{1-\rho(z,w)}{1+\rho(z,w)}\leq\frac{H(z)}{H(w)}\leq\frac{1+\rho(z,w)}{1-\rho(z,w)}\ .
\end{eqnarray}
In certain parts of this paper, we will need to suppose that $z,w$ are pseudohyperbolically close:
$\rho(z,w)<x$ for some $0<x<1$, so that $(x-1)/(x+1)\le H(z)/H(w) \le (x+1)/(x-1)$. The constant
$(x+1)/(x-1)$ will occasionally be called the Harnack constant. 

In this section we shall always assume, without loss of generality, that positive harmonic functions $H\in\Har_+(\mathbb D)$ defining pseudohyperbolic neighborhoods $D(\lambda,e^{-H(\lambda)})$ are big enough so that the corresponding
Harnack constant is at most 2. More specifically, let $H\in\Har_+(\mathbb D)$ be such that $H(z) \geq \ln3$ for any $z \in \D$; then 
\begin{eqnarray}\label{Harnack1}
 \frac{H(w)}{2} \leq H(z) \leq 2H(w) \quad \text{if} \quad  \rho(z,w) \leq e^{-H(z)} .
\end{eqnarray}


\bigskip



Here is another easy and useful fact.
\begin{lemma}\label{Nevfcts}
There exists a universal constant $C>0$ such that for any $f\in N$ with $|f(z)|\le e^{H(z)}$, $z\in\D$, for some $H\in \Har_+(\D)$, one has 
\begin{itemize}
\item[(a)] For every $z\in\D$, $(1-|z|)|f'(z)|\le e^{CH(z)}$, $(1-|z|)^2|f''(z)|\le e^{CH(z)}$.
\item[(b)] For every $z,w\in\D$ with $\rho(z,w)\le 1/3$, $|f(z)-f(w)|\le \rho(z,w)e^{CH(z)}$.
\end{itemize}
\end{lemma}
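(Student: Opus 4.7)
The plan is to deduce (a) from Cauchy's integral formula on a Euclidean disk about $z$ whose radius is a fixed fraction of $1-|z|$, and then to derive (b) from (a) by integrating $f'$ along the Euclidean segment from $w$ to $z$.

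For (a), I would take $r=(1-|z|)/3$ and work on the Euclidean circle $\{|w-z|=r\}\subset\D$. Since $|w|\le |z|+r$, one has
\[
|1-\bar z w|\ge 1-|z|^2-r|z|=(1-|z|)\bigl(1+\tfrac{2}{3}|z|\bigr)\ge 1-|z|=3r,
\]
so $\rho(z,w)\le 1/3$ for every such $w$. Harnack's inequality \eqref{Harnack} then yields $H(w)\le 2H(z)$, and the hypothesis $|f|\le e^H$ gives $|f(w)|\le e^{2H(z)}$ on the circle. The classical Cauchy bounds $|f'(z)|\le r^{-1}\sup_{|w-z|=r}|f(w)|$ and $|f''(z)|\le 2r^{-2}\sup_{|w-z|=r}|f(w)|$ then produce $(1-|z|)|f'(z)|\le 3e^{2H(z)}$ and $(1-|z|)^2|f''(z)|\le 18e^{2H(z)}$. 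Under the standing convention following \eqref{Harnack1} that $H$ may be assumed bounded below by a fixed positive constant (one may replace $H$ by $H+c_0$ without affecting the hypothesis), these prefactors can be absorbed into the exponent by taking $C$ slightly larger than $2$.

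For (b), since pseudohyperbolic disks are Euclidean disks (hence convex), the Euclidean segment $\gamma=\{\zeta(t)=(1-t)z+tw:t\in[0,1]\}$ lies inside $D(z,1/3)$ whenever $\rho(z,w)\le 1/3$. For each $\zeta\in\gamma$, the inequality $\rho(z,\zeta)\le 1/3$ gives $H(\zeta)\le 2H(z)$ by Harnack, and a standard comparison gives $1-|\zeta|\ge c_0(1-|z|)$ with a universal $c_0>0$. Combining these with part~(a),
\[
|f'(\zeta)|\le \frac{e^{CH(\zeta)}}{1-|\zeta|}\le \frac{e^{2CH(z)}}{c_0(1-|z|)}.
\]
To exchange Euclidean length for pseudohyperbolic distance, I would iterate $|1-\bar z w|\le (1-|z|^2)+|z||z-w|$ with $|z-w|=\rho(z,w)|1-\bar z w|$ to obtain $|1-\bar z w|\le (1-|z|^2)/(1-|z|\rho(z,w))\le 3(1-|z|)$ under $\rho(z,w)\le 1/3$, so $|z-w|\le 3\rho(z,w)(1-|z|)$. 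The fundamental theorem of calculus then gives
\[
|f(z)-f(w)|\le |z-w|\sup_{\zeta\in\gamma}|f'(\zeta)|\le \frac{3}{c_0}\rho(z,w)\,e^{2CH(z)},
\]
and again the constant is absorbed into the exponent.

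The proof is essentially a bookkeeping exercise; the only mildly delicate step is the absorption of numerical constants into $e^{CH(z)}$, which is legitimate because of the section-wide convention on the lower bound of $H$ and the stability of the hypothesis $|f|\le e^H$ under replacing $H$ by $H+c_0$.
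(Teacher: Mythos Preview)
Your proof is correct and follows exactly the approach the paper indicates: Cauchy's formula together with Harnack's inequality for part~(a), and integration of $f'$ along a segment combined with Harnack for part~(b). The paper's own proof is just a two-line sketch of precisely these steps, so you have faithfully carried out the intended argument with all details supplied.
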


\begin{proof}
The estimates in (a) are an easy consequence of Cauchy's formula and Harnack's inequality. The estimate in (b) follows immediately from (a) integrating $f'$ from $z$ to $w$ and using again
Harnack's inequality.
\end{proof}

\begin{proof}[Proof of Theorem \ref{IntThm}.]


(a)$\Longrightarrow$ (b).  By hypothesis there exists $H_0\in\Har_+(\D)$ satisfying Theorem~\ref{IntThm}(a), and therefore the 
disks $\mathcal{D}_n^{H_0}=D(\lambda_n, e^{-H_0(\lambda_n)})$ are pairwise disjoint. 
We will show that condition (b) holds with $H= C H_0$, where $C$ is an absolute constant. Consider the disks $\mathcal{D}_n^{2H_0} = D(\lambda_n , e^{-2 H_0 (\lambda_n)})$.  

i) Pick $z\in \mathcal{D}_n^{2 H_0}$. 
By construction, $\lambda_n$ is the closest point of $\Lambda$ to $z$
 and
\[
|B(z)|=|B_n(z)||b_{\lambda_n}(z)|=|B_n(z)| \rho(z,\Lambda)
\]
Since $B_n$ does not vanish in $\mathcal{D}_n^{2H_0}$, 
by Harnack's inequalities \eqref{Harnack} and \eqref{Harnack1}, there exists an absolute constant $C>0$ such that 
\[
  |B_n(z)|\geq |B_n(\lambda_n)|^C \geq e^{- C H_0(\lambda_n)}\ge  e^{-2C H_0(z)} \ .
\]

ii) Let  $\Omega:=\D\setminus \cup_n \mathcal D_n^{2H_0}$. The function $B$ is holomorphic and 
non-vanishing in $\Omega$. Let $F$ be the holomorphic function with $\Re F=4 C H_0$ on $\D$.
Then $G=Be^{F}$ is also holomorphic and non-vanishing on $\Omega$. For 
$z\in\partial \mathcal{D}_n^{2H_0}$, from the preceding case we know that
\[
 |G(z)|=|B(z)|e^{4 C H_0(z)}=|B_n(z)|\rho(z,\Lambda_n)e^{4C H_0(z)}\ge e^{-2CH_0(z)-H_0(\lambda_n)
  +4CH_0(z)}\ge 1\ .
\]
For $z \in \partial \D$ we have $|G(z)|= e^{4C H_0 (z)} \geq 1$.
Hence throughout $\Omega$
we have $|G|\ge 1$, that is,  $|B(z)|\ge |e^{-F(z)}|=e^{-4 C H_0(z)}$ for $z \in \Omega$.

\bigskip

(b)$\Longrightarrow$(c). We can assume that the function $H$ in (b) satisfies $\inf \{H(z) : z \in \D \} \geq \ln 3 $. Separate into two cases.

i) If $\rho(z,\Lambda) \geq e^{-10 H(z)}$ then, by hypothesis, $
|B(z)|+(1-|z|^2)|B'(z)|\geq |B(z)| \geq e^{-11 H(z)}.
$ 

ii) If $\rho(z,\Lambda) \leq e^{-10 H(z)}$ there exists a unique $\lambda_n$ such that such that $\rho(z,\Lambda)=\rho(z,\lambda_n)$. Then by hypothesis
\[
 \left|(1-\bar\lambda_n z)\frac{B(z)}{z-\lambda_n}\right|\geq e^{-H(z)} , \quad z\neq \lambda_n,
\]
and taking the limit as $z\to\lambda_n$, we deduce that $ (1-|\lambda_n|^2) |B'(\lambda_n)|\geq e^{-H(\lambda_n)}\ $. Finally, by Lemma~\ref{bounded-f}(c) and by Harnack's inequality \eqref{Harnack1} 
\[
 (1-|z|^2) |B'(z)|\geq e^{-H(\lambda_n)}-6\rho(z,\lambda_n)\geq e^{-2H(z)}-
  e^{-8 H(z)}\geq \frac{1}{2}e^{-2 H(z)}\geq e^{-3 H(z)},
\]
and therefore
\[
 |B(z)|+(1-|z|^2)|B'(z)|\geq e^{-11 H(z)}, \qquad z\in\D .
\]

(c)$\Longrightarrow$(a). This implication is immediate taking $z=\lambda_n$.

(a)$\Longrightarrow$(d). Let $H \in \Har_+(\D)$ such that $|B_n (\lambda_n) | \geq e^{- H(\lambda_n)}$, $n \in \N$, that is, 
 \bea \label{harmmaj}
  \sum_{k: k\neq n} \log\frac 1{\rho(\lambda_n,\lambda_k)}\leq  H(\lambda_n),\qquad n\in\N \ .
 \eea
Again the disks $\mathcal{D}_n^H$ are disjoint, and so will be the smaller
disks $\mathcal{D}_n^{4H}$.
By definition
\[
 \omega(\lambda_n, \partial\D, \Omega_n^H)=1- \sum\limits_{\stackrel{k: k\neq n}{\rho(\lambda_n,\lambda_k) \leq 1/2}} \omega(\lambda_n, \partial \mathcal D_{k}^H , \Omega_n^H)\ .
\]
Since
 \[
  \omega(z,\partial \mathcal D_{k}^{4H}, \D\setminus \mathcal D_{k}^{4H})=\frac{\log (1/\rho(z, \lambda_k))}{4H(\lambda_k)},
 \]
estimate \eqref{harmmaj} is equivalent to
\begin{equation}\label{int-har}
\sup_{n\in\N}  \sum_{k:k\neq n} \omega(\lambda_n,\partial \mathcal D_{k}^{4H}, \D\setminus \mathcal D_{k}^{4H})\, \frac{4H(\lambda_k)}{H(\lambda_n)}\leq 1 \ .
\end{equation}
If $\rho(\lambda_n,\lambda_k)\leq 1/2$ Harnack's inequalities \eqref{Harnack} imply that $1/3\leq H(\lambda_n)/H(\lambda_k)\leq 3$. Thus, by \eqref{int-har},
\begin{align*}
 \sum\limits_{\stackrel{k: k\neq n}{\rho(\lambda_n,\lambda_k) \leq 1/2}} \omega(\lambda_n, \partial \mathcal D_{k}^{4H} , \Omega_n^{4H})
 &\leq \sum\limits_{\stackrel{k: k\neq n}{\rho(\lambda_n,\lambda_k) \leq 1/2}} \omega(\lambda_n, \partial \mathcal D_{k}^{4H} ,\D\setminus \mathcal D_{k}^{4H})\\
 &\leq 3 \sum\limits_{\stackrel{k: k\neq n}{\rho(\lambda_n,\lambda_k) \leq 1/2}} \omega(\lambda_n, \partial \mathcal D_{k}^{4H} ,\D\setminus \mathcal D_{k}^{4H})\, \frac{H(\lambda_k)}{H(\lambda_n)}\leq \frac{3}{4}\ , 
\end{align*}
and therefore
\[
 \omega(\lambda_n, \partial\D, \Omega_n^{4H})\geq \frac{1}{4} \ .
\]

Observe that by replacing $4H$ by $NH$ in the above reasoning it is possible to get
$\omega(\lambda_n, \partial\D, \Omega_n^{NH})\geq 1-3/N$.
\bigskip

(d)$\Longrightarrow$ (a). For simplicity we drop the superscript $H$ in the notations $\mathcal D_n^H$ and  $\Omega_n^H$, and let $\delta_n=e^{-H(\lambda_n)}$. Let $\varepsilon=\inf\limits_{n\in\N} \omega(\lambda_n, \partial\D, \Omega_n)>0$ and consider the bigger domains
\[
 \tilde\Omega_n=\D\setminus \bigcup_{\stackrel{k: k\neq n}{\rho(\lambda_n,\lambda_k)\leq 1/4}} \mathcal D_{k} \ .
\]
Notice that then
$ \omega(\lambda_n, \partial\D, \tilde\Omega_n) \geq \omega(\lambda_n, \partial\D, \Omega_n) \geq \varepsilon\ .
$
Given $N\ge 1$, to be determined later on, let $\Delta_n = D(\lambda_n, \delta_n^N)
\subset D(\lambda_n,\delta_n)$ and 
\[
 \mathcal V_n=\D\setminus \bigcup_{\stackrel{k: k\neq n}{\rho(\lambda_n,\lambda_k)\leq 1/4}} \Delta_k\ .
\]
Notice that $\Omega_n \subset \tilde\Omega_n\subset \mathcal V_n$. Define the harmonic functions
\[
 U_n(z)=\omega(z,\partial \D, \Omega_n)\quad\textrm{and}\quad u_n(z)=\omega(z,\partial \D, \mathcal V_n)\ .
\]
Then $u_n(z)\geq U_n(z)$ for $\in\Omega_n$. In particular $u_n(\lambda_n)\geq \varepsilon>0, n\in\mathbb N.$
We apply Green's formula to the functions $\Phi(z)=\log(1/\rho(z,\lambda_n))$
and $u_n$ on the domain $\mathcal V_n$:
\begin{align*}
 u_n(\lambda_n)&=-\frac{1}{2\pi}\int_{\mathcal V_n}u_n\Delta\Phi dm\\
 &=\frac 1{2\pi}\int_0^{2\pi} P(\lambda_n, e^{i\theta}) d\theta-\sum\limits_{\stackrel{k: k\neq n}{\rho(\lambda_n,\lambda_k)\leq 1/4}} \frac 1{2\pi}\int_{\partial \Delta_k} \log\bigl(\frac 1{\rho(\lambda_n,\zeta)}\bigr)\frac{\partial u_n}{\partial n}(\zeta)\, d\sigma(\zeta),
\end{align*}
where $\partial/\partial n$ indicates the outer normal derivative. 
Using the hypothesis and the fact that for $\zeta\in\partial \Delta_k$ one has
\[
  \log\bigl(\frac 1{\rho(\lambda_n,\zeta)}\bigr)\asymp  \log\bigl(\frac 1{\rho(\lambda_n,\lambda_k)}\bigr),
\]
we deduce that
\beqa 
\lefteqn{\sum\limits_{\stackrel{k: k\neq n}{\rho(\lambda_n,\lambda_k)\leq 1/4}}  \log\bigl(\frac 1{\rho(\lambda_n,\lambda_k)}\bigr) \frac 1{2\pi}\int_{\partial \Delta_k} \frac{\partial u_n}{\partial n}(\zeta)\, d\sigma(\zeta)} \\
&&\qquad\lesssim
\sum\limits_{\stackrel{k: k\neq n}{\rho(\lambda_n,\lambda_k)\leq 1/4}} \frac 1{2\pi}\int_{\partial \Delta_k} \log\bigl(\frac 1{\rho(\lambda_n,\zeta)}\bigr)\frac{\partial u_n}{\partial n}(\zeta)\, d\sigma(\zeta)\leq 1-\varepsilon\ .
\eeqa 
Taking into account \eqref{interpol-local} we will be done as soon as we prove that $\frac{\partial u_n}{\partial n}(\zeta) \geq 0$, $\zeta \in \partial \Delta_k$ and 
\begin{equation}\label{green}
 \int_{\partial \Delta_k} \frac{\partial u_n}{\partial n}(\zeta)\, d\sigma(\zeta)\gtrsim \frac 1{H(\lambda_n)},\quad k\neq n\ .
\end{equation}
Define for $k\neq n$,
\begin{align*}
 u_{n,k}(z)&=\omega(z,\partial\D, \mathcal V_n\cup \Delta_k), \\
 v_k(z)&=\omega(z,\partial\Delta_k, \D\setminus \Delta_k)=\frac{\log(1/\rho(z,\lambda_k))}{\log (1/\delta_n^N)}\ 
\end{align*}
and notice that, again by the maximum principle,
\begin{equation}\label{u-unk}
 u_n\geq u_{n,k}-v_k\quad\textrm{on}\ \mathcal V_n\ . 
\end{equation}
For $\lambda_k,\lambda_j$ such that $\rho(\lambda_n,\lambda_k),\rho(\lambda_k,\lambda_j)\leq 1/4$ we have $\rho(\lambda_n,\lambda_j)\leq 1/2$ and therefore 
\[
 u_{n,k}(\lambda_k)\geq \omega\Big(\lambda_k,\partial\D, \D\setminus \bigcup_{\stackrel{j\neq k,n}{\rho(\lambda_j,\lambda_k)\leq 1/2}} \Delta_j\Big)\geq \omega(\lambda_k,\partial\D, \Omega_k)\geq \varepsilon>0\ .
\]
By Harnack's inequalities there exists $\varepsilon'=\varepsilon'(\varepsilon)>0$ such that
$
 u_{n,k}(z)\geq \varepsilon'>0$ for $ z \in  \partial \mathcal D_k
$.  
Also, for $z\in \partial\mathcal D_k$,
\[
 v_k(z)=\frac{\log(1/\delta_k)}{\log(1/\delta_k^N)}=\frac 1N\ 
\]
and inequality \eqref{u-unk} yields 
\[
 u_n(z)\geq u_{n,k}(z)-v_k(z)\geq \varepsilon'-\frac 1N, \quad z\in \partial \mathcal{D}_k.
\]
Choose $N$ so that $1/N<\varepsilon'/2$. Then
$u_n(z)\geq \varepsilon'/2$ for $z\in\partial\mathcal D_k$, $k\neq n$,
and by the maximum principle
\[
 u_n(z)\geq \frac{\varepsilon'}2 \omega_k(z),\quad z\in\mathcal D_k\setminus\Delta_k\ ,
\]
where
\[
 \omega_k(z)=\omega(z, \partial\mathcal D_k, \mathcal D_k\setminus\Delta_k)
=\frac{\log(\rho(z,\lambda_k)/\delta_k^N)}{\log(1/\delta_k^{N-1})}\ .
\]

Since $\log (1/\delta_k)=H(\lambda_k)$, this inequality implies that for $\zeta\in\partial\Delta_k$
\[
 \frac{\partial u_n}{\partial n}(\zeta)\geq \frac{\varepsilon'}2 \frac{\partial \omega_k}{\partial n}(\zeta)\gtrsim\frac 1{H(\lambda_k)}
 \frac{\partial}{\partial n}\log\rho(\zeta,\lambda_k)
\]
and therefore
\[
 \int_{\partial \Delta_k}  \frac{\partial u_n}{\partial n}(\zeta)\,  d\sigma(\zeta)\gtrsim \frac 1{H(\lambda_k)}\int_{\partial \Delta_k}
 \frac{\partial}{\partial n}\log\rho(z,\lambda_k)\,  d\sigma(\zeta)\ .
\]

Finally, we use Green's formula with $u\equiv 1$, $v(\zeta)=\log\rho(\zeta,\lambda_k)$  and the domain $\D\setminus\Delta_k$:
\[
\int_{\partial \Delta_k} \frac{\partial}{\partial n}\log\rho(\zeta,\lambda_k) \,  d\sigma(\zeta)=
\int_{\partial\D}\frac{\partial}{\partial n}\log\rho(\zeta,\lambda_k) \,  d\sigma(\zeta)=
\int_{\partial\D} P(\lambda_k,\zeta)\,  d\sigma(\zeta)=1\ .
\]
\end{proof}

We end this section with two easy consequences which will be used later. The first one says that Nevanlinna interpolating sequences are stable under convenient pseudohyperbolic perturbations, and will be deduced from Theorem~\ref{IntThm}(d). 

\begin{corollary}\label{stability}
 Let $\Lambda = \{\lambda_n \}$ be a Nevanlinna interpolating sequence and let $H\in\Har_+(\D)$,  satisfying Theorem~\ref{IntThm}(a).  If $\Lambda'=\{\lambda_n'\}_n\subset\mathbb D$ satisfies 
 \[
  \rho(\lambda_n, \lambda_n')\leq \frac 14 e^{-H(\lambda_n)}\ ,\quad n\in\mathbb N,
 \]
 then $\Lambda'$ is also a Nevanlinna interpolating sequence.
\end{corollary}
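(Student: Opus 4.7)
The plan is to verify condition~(d) of Theorem~\ref{IntThm} for the perturbed sequence $\Lambda'$ and then conclude via the equivalence (a)$\Leftrightarrow$(d). First, apply the strengthened form of (a)$\Rightarrow$(d) (stated right after Theorem~\ref{IntThm}) to obtain $H_1 := N H \in \Har_+(\D)$, for $N$ to be chosen large, such that the disks $\mathcal{D}_n^{H_1}$ are pairwise disjoint and $\omega(\lambda_n, \partial\D, \Omega_n^{H_1}) \geq 1 - \varepsilon_0$ for any prescribed $\varepsilon_0 > 0$. The goal is to check that (d) holds for $\Lambda'$ with the same function $H_1$.

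The geometric backbone is the observation that Theorem~\ref{IntThm}(a) for $\Lambda$ forces
$\rho(\lambda_n, \lambda_k) \geq \max(e^{-H(\lambda_n)}, e^{-H(\lambda_k)})$ for every $k \neq n$, since every factor in the product in \eqref{NIS} lies in $(0,1]$. Combined with the hypothesis $\rho(\lambda_n, \lambda_n') \leq \tfrac{1}{4}e^{-H(\lambda_n)}$, the pseudohyperbolic triangle inequality then produces
\[
 \rho(\lambda_n', \lambda_k') \geq \rho(\lambda_n, \lambda_k) - \tfrac{1}{4}e^{-H(\lambda_n)} - \tfrac{1}{4}e^{-H(\lambda_k)} \geq \tfrac{1}{2}\rho(\lambda_n, \lambda_k),
\]
and Harnack's inequality (since $\rho(\lambda_n, \lambda_n') \leq 1/4$ implies $H_1(\lambda_n') \asymp H_1(\lambda_n)$) yields pairwise disjointness of the perturbed disks $D(\lambda_n', e^{-H_1(\lambda_n')})$ once $N$ is large, because $e^{-H_1(\lambda_n')}$ becomes much smaller than $\tfrac{1}{2}e^{-H(\lambda_n)}$.

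The harmonic measure estimate is the crux. Let $\widetilde{\Omega}_n$ be the analogue of $\Omega_n^{H_1}$ built from $\Lambda'$. I would bound
\[
 1 - \omega(\lambda_n', \partial\D, \widetilde{\Omega}_n) \leq \sum_{\substack{k \neq n\\ \rho(\lambda_k', \lambda_n') \leq 1/2}} \frac{\log(1/\rho(\lambda_n', \lambda_k'))}{H_1(\lambda_k')}
\]
and split the sum according to whether $\rho(\lambda_n, \lambda_k) \leq 1/2$ (close case) or not (far case). The close case is handled by reproducing the (a)$\Rightarrow$(d) estimate from the proof of Theorem~\ref{IntThm}, where the factor $1/N$ coming from $H_1 = NH$ makes this contribution $O(1/N)$. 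The hard part is the far case: the perturbation could in principle pull a point $\lambda_k$ with $\rho(\lambda_n, \lambda_k) > 1/2$ into the half-disk around $\lambda_n'$. The resolution is that the non-additive pseudohyperbolic triangle inequality together with the bound $\rho(\lambda_n, \lambda_n'), \rho(\lambda_k, \lambda_k') \leq 1/4$ keeps $\rho(\lambda_n, \lambda_k) \leq c_0$ for some absolute $c_0 \in (1/2, 1)$, so the number of such $k$ is controlled by \eqref{interpol-local} applied with $c = c_0$, each summand is at most $(\log 4)/H_1(\lambda_k')$, and the whole contribution is again $O(1/N)$. Taking $N$ large enough makes the total sum strictly below $1$, so $\inf_n \omega(\lambda_n', \partial\D, \widetilde{\Omega}_n) > 0$, and Theorem~\ref{IntThm}(d)$\Rightarrow$(a) concludes that $\Lambda'$ is Nevanlinna interpolating.
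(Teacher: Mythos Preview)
Your argument is correct, but it takes a considerably more laborious route than the paper's. You re-estimate the harmonic measure of $\widetilde\Omega_n$ from scratch by bounding each term $\omega(\lambda_n',\partial\mathcal D_k',\D\setminus\mathcal D_k')$ and summing, which forces you into the close/far case split and a counting argument for the far points.

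The paper avoids all of this with a single monotonicity step. Exploiting the flexibility in condition~(d) (the constant $1/2$ can be any $c\in(0,1)$, and $H$ can be replaced by any larger multiple), it sets
\[
\Omega_n'=\D\setminus\bigcup_{\substack{k\neq n\\ \rho(\lambda_k',\lambda_n')\le 1/4}} D(\lambda_k',e^{-2H(\lambda_k')})
\]
and simply observes that $\Omega_n\subset\Omega_n'$: the hypothesis $\rho(\lambda_k,\lambda_k')\le\tfrac14 e^{-H(\lambda_k)}$ together with Harnack ensures each removed disk $D(\lambda_k',e^{-2H(\lambda_k')})$ sits inside $D(\lambda_k,e^{-H(\lambda_k)})$, and the change from $1/2$ to $1/4$ guarantees that every index appearing in the union for $\Omega_n'$ already appears in the union for $\Omega_n$. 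Then monotonicity of harmonic measure in the domain and a single application of Harnack (to pass from $\lambda_n$ to $\lambda_n'$) give
\[
\omega(\lambda_n',\partial\D,\Omega_n')\ge c\,\omega(\lambda_n,\partial\D,\Omega_n')\ge c\,\omega(\lambda_n,\partial\D,\Omega_n),
\]
which is all that is needed. Your approach works and is self-contained, but the paper's inclusion argument is both shorter and conceptually cleaner.
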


\begin{proof}
 We shall use the characterization of Nevanlinna interpolating sequences given in Theorem~\ref{IntThm}(d). Consider the domains
 \[
  \Omega_n=\mathbb D\setminus \bigcup_{\stackrel{k: k\neq n}{\rho(\lambda_k,\lambda_n)\leq 1/2}} D(\lambda_k, e^{-H(\lambda_k)}) \quad ,\quad
  \Omega_n'=\mathbb D\setminus \bigcup_{\stackrel{k: k\neq n}{\rho(\lambda_k^\prime,\lambda_n^\prime)\leq 1/4}} D(\lambda_k^\prime, e^{-2H(\lambda_k^\prime)}) 
 \]
 Then $\Omega_n\subset\Omega_n^\prime$, and by Harnack's inequality there exists $c>0$ such that
 \[
  \omega(\lambda_n^\prime, \partial\mathbb D, \Omega_n^\prime)\geq c\,\omega(\lambda_n, \partial\mathbb D, \Omega_n^\prime)
  \geq c\, \omega(\lambda_n, \partial\mathbb D, \Omega_n).
 \]
 The result follows then from the hypothesis.
\end{proof}

\begin{corollary}\label{lowerbound}
Let $\Lambda$ be a Nevanlinna interpolating sequence and let $H \in  \Har_+(\D)$ be such that  $\inf\{H(z) : z \in \D \} \geq \ln 3$ and 
$|B(z)|\ge e^{-H(z)}\rho(z,\Lambda)$, $z \in \D$. 
Then for every $H_1\in\Har_+(\D)$ with  $\inf\{H_1 (z) : z \in \D \} \geq \ln 3$, we have
\beqa
 \  |B(z)|\ge e^{-(2H(z)+2H_1(z))}\ \text{ whenever }\ z\notin\cup_n \mathcal{D}_n^{H_1}.
\eeqa
\end{corollary}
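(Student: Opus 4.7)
The plan is to combine the hypothesis $|B(z)|\ge e^{-H(z)}\rho(z,\Lambda)$ with a pointwise lower bound on $\rho(z,\Lambda)$ that is forced by the assumption $z\notin\bigcup_n \mathcal{D}_n^{H_1}$. Specifically, it suffices to prove that $\rho(z,\Lambda)\ge e^{-2H_1(z)}$, since then the hypothesis gives $|B(z)|\ge e^{-H(z)-2H_1(z)}\ge e^{-(2H(z)+2H_1(z))}$ (here we even gain an extra factor of $e^{-H(z)}$ compared with the stated bound).

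To establish $\rho(z,\Lambda)\ge e^{-2H_1(z)}$, I would pick $\lambda_{n_0}\in\Lambda$ closest to $z$ (which exists because $\Lambda$ is a Blaschke sequence, so $\rho(z,\lambda_n)\to 1$). The hypothesis $z\notin \mathcal{D}_{n_0}^{H_1}$ directly gives $\rho(z,\lambda_{n_0})\ge e^{-H_1(\lambda_{n_0})}$, so the task reduces to comparing $H_1(\lambda_{n_0})$ with $H_1(z)$. I split into two cases. If $\rho(z,\lambda_{n_0})\ge 1/3$, then since $\inf H_1\ge \ln 3$ we have $\rho(z,\Lambda)\ge 1/3\ge e^{-H_1(z)}\ge e^{-2H_1(z)}$, and we are done. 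Otherwise $\rho(z,\lambda_{n_0})<1/3<1/2$, and Harnack's inequality \eqref{Harnack} gives $H_1(\lambda_{n_0})\le \frac{1+1/3}{1-1/3}H_1(z)=2H_1(z)$, so $\rho(z,\lambda_{n_0})\ge e^{-H_1(\lambda_{n_0})}\ge e^{-2H_1(z)}$, as desired.

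There is no serious obstacle here; the only point requiring attention is the Harnack step, which needs $\rho(z,\lambda_{n_0})$ to be bounded away from $1$ so that the Harnack constant is controlled. This is automatic from the normalization $\inf H_1\ge \ln 3$ (which makes the radii $e^{-H_1(\lambda_n)}\le 1/3$) together with the trivial dichotomy above.
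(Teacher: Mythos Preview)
Your proof is correct and follows essentially the same approach as the paper: split according to whether the nearest point of $\Lambda$ is pseudohyperbolically far or close, use the normalization $\inf H_1\ge \ln 3$ in the far case, and use Harnack's inequality to transfer $H_1(\lambda_{n_0})$ to $H_1(z)$ in the close case. Your choice of cutoff $1/3$ (rather than the paper's $1/2$) makes the Harnack constant exactly $2$, which matches the stated exponent more cleanly.
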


\begin{proof}[Proof of Corollary \ref{lowerbound}]
Suppose first $z\notin \cup_n D(\lambda_n,1/2)$. Then  $\rho(z,\Lambda)\ge 1/2$ and
\[
 |B(z)|\ge e^{-H(z)}\rho(z,\Lambda) \ge \frac{1}{2}e^{-H(z)}
 \ge e^{-2H(z)}.
\]
Next, if $z\in \cup_n D(\lambda_n,1/2)$ 
picking the closest point $\lambda_0 \in \Lambda$ with 
$\rho (z,\Lambda) =\rho(z,\lambda_0)\ge e^{-H_1(\lambda_0)}$, 
Harnack's inequality \eqref{Harnack1} gives 
\[
 |B(z)|\ge e^{-H(z)}\rho(z,\Lambda)=e^{-H(z)}\rho(z,\lambda_0)
 \ge e^{-H(z)-H_1(\lambda_0)}\ge e^{-H(z)-2H_1(z)}.
\]
\end{proof}

\section{Proof of Theorem \ref{MainThm}}\label{S3}

Notice first that we can assume throughout the proof that the functions $f_i$ are Blaschke products. For conditions (a), (b) and (d) this is easily seen by considering the Nevanlinna factorization $f_i = B_i  e^{g_i}$, where $B_i$ is the Blaschke product with the zeros of $f_i$ and $g_i$ is such that  $\Re(g_i)=H_i^+-H_i^-$, for some $H_i^+, H_i^-\in\Har_+(\D)$. Then, since $e^{g_i}$, $i=1,\ldots, m$, are invertible functions in $N$, we have $I(f_1,\dots, f_m)=I(B_1,\dots,B_m)$ and $J(f_1,\dots, f_m)=J(B_1,\dots,B_m)$.
As for condition (c), let us now see that there exists $H\in\Har_+(\D)$ such that
\bea \label{TolBlaschke}
 \sum_{i=1}^m( |B_i(z)|+(1-|z|^2) |B_i^\prime (z)|)\geq e^{-H(z)}\ ,\quad z\in\D
\eea
if and only if (c) holds with a suitable, possibly different, $H\in \Har_+(\D)$.

Let us first suppose that \eqref{TolBlaschke} holds. Let $E_i=e^{g_i}$, $i=1,\dots,m$, and take $H_1\in\Har_+(\D)$ such that
\[
 \bigl|\log|E_i (z)|\bigr|=|\Re (g_i (z))|\leq H_1 (z),  \quad z \in \D ,  \qquad i=1,\dots,m\ .
\]
Recall from Lemma \ref{Nevfcts} that
\begin{equation}\label{Ei}
 (1-|z|^2) |E_i^\prime (z)|\leq e^{C H_1(z)}\ ,\quad z\in\D,
\end{equation}
where $C>0$ is an absolute constant. Fix $z\in\D$. We shall distinguish two cases.

(i) Assume first that $z\in\D$ is such that
\[
 \sum_{i=1}^m |B_i(z)|\geq \frac 14 e^{- H(z)-(1+C) H_1(z)}\ .
\]
Then
\[
 \sum_{i=1}^m |f_i(z)|= \sum_{i=1}^m |B_i(z)| |E_i(z)| \geq \frac 14 e^{- H(z)-(2+C) H_1(z)}\ 
\]
and (c) holds.

(ii) Assume now that
\[
 \sum_{i=1}^m |B_i(z)|\leq \frac 14 e^{- H(z)-(1+C) H_1(z)}\ ,
\]
which is in particular bounded by $\frac{1}{4} e^{-H(z)}$.
Then by \eqref{TolBlaschke} we have 
\[
 \sum_{i=1}^m (1-|z|^2) |B_i^\prime(z)|\geq \frac 34 e^{- H(z)}\ .
\]
Therefore
\[
 \sum_{i=1}^m (1-|z|^2) |B_i^\prime(z)| |E_i(z)|\geq e^{-H_1(z)} \frac 34 e^{- H(z)}\ ,
\]
and by \eqref{Ei}
\[
 \sum_{i=1}^m (1-|z|^2) |B_i(z)| |E_i^\prime(z)|\leq  \frac 14 e^{- H(z)- H_1(z)}\ 
\]
Thus
\begin{align*}
  \sum_{i=1}^m (1-|z|^2) |f_i^\prime(z)|&\geq \sum_{i=1}^m (1-|z|^2) |B_i^\prime(z)| |E_i(z)|-\sum_{i=1}^m (1-|z|^2) |B_i(z)| |E_i^\prime(z)|\\
  &\geq \frac 34 e^{- H(z)- H_1(z)}-\frac 14 e^{- H(z)- H_1(z)}=\frac 12 e^{- (H(z)+ H_1(z))}\ 
\end{align*}
and so (c) holds.

The converse is based on exactly the same argument. Observe that we can write
$B_i=f_i/E_i=f_i\mathcal{E}_i$ where $\mathcal{E}_i$ is an invertible function in $N$ for which
we get similar estimates as for $E_i$.
Now, replacing in the arguments above $B_i$ by $f_i$ and $E_i$ by $\mathcal{E}_i$,
we will reach \eqref{TolBlaschke} when starting from (c).

\bigskip

Before giving the proof of Theorem~\ref{MainThm} we shall see that (a) implies 
$$I(f_1,\ldots, f_m)=
J(f_1,\ldots, J_m). $$
We only have to show the reverse inclusion.
For this, let $g\in J(f_1,\dots,f_m)$ and let $H\in\Har_+(\D)$ be such that
\[
 |g(z)|\leq e^{H(z)} \sum_{i=1}^m |f_i(z)|,  \quad  z \in \D .
\]
Let $B$ be a Nevanlinna interpolating Blaschke product in $I(f_1,\dots,f_m)$ and denote by $\Lambda=\{\lambda_n\}_n$ its zero set. 
Since for any $i=1, \ldots , m$, we have  
\[
 \frac{|g(\lambda_n)\overline{f_i(\lambda_n)}|}{\sum_{i=1}^m|f_i(\lambda_n)|^2 }\leq 
 \frac{e^{H(\lambda_n)} (\sum_{i=1}^m|f_i(\lambda_n)|)^2}{\sum_{i=1}^m|f_i(\lambda_n)|^2} \leq m e^{H(\lambda_n)},\qquad n\in \N ,
\]
using the description of the trace space $N|\Lambda$ in \eqref{TrSpace} we see that 
there exist $h_i\in N$ such that
\[
 h_i(\lambda_n)=\frac{g(\lambda_n)\overline{f_i(\lambda_n)}}
 {\sum_{i=1}^m|f_i(\lambda_n)|^2 },\qquad n\in \N .
\]
Consequently, the function $\sum_{i=1}^m f_i h_i - g$ vanishes on $\Lambda$, and therefore there exists $G\in N$ such that
\[
 \sum_{i=1}^m f_i h_i -g=BG\ .
\]
Since $BG\in I(f_1,\dots,f_m)$, this shows that $g\in I(f_1,\dots,f_m)$ as well.

\bigskip

Let us now move to the proof of Theorem~\ref{MainThm}.

(a) $\Lra$ (b) is obvious because $I(f_1, \ldots , f_m ) \subset J(f_1 , \ldots , f_m)$.

(b) $\Lra$ (c).  
Assume that $B\in J(f_1,\ldots,f_m)$ is a Nevanlinna interpolating Blaschke product and let $\Lambda=\{\lambda_n\}_n$ denote its zero set. By definition and by Theorem~\ref{IntThm}(b) there exist $H, H_1\in \Har_+(\D)$ such that 
\bea\label{nou}
 \rho(z,\Lambda)e^{-H_1(z)}\leq |B(z)|\le e^{H(z)} \sum_{i=1}^m |f_i(z)|,\quad z\in\D .
\eea
Recall from Lemma \ref{Nevfcts} that
there exists $H_2 \in \Har_+(\D)$ such that
\[
 |f_i(z)|+(1-|z|)|f_i'(z)|+(1-|z|)^2|f_i''(z)|\le e^{H_2 (z)} ,  \quad z\in\D , \, i=1, \ldots , m .
\]
Now let $H_3 \in \Har_+(\D) $, $H_3 \geq H + H_1 + H_2 + \ln 3$ to be chosen later. Observe that the disks $\mathcal{D}_n=\mathcal{D}_n^{H_3}
=D(\lambda_n,e^{-H_3(\lambda_n)})$ are disjoint. Observe also that \eqref{Harnack1} holds. 
By \eqref{nou} and  Corollary \ref{lowerbound}, we have 
\[
\sum_{i=1}^m |f_i(z)|\ge e^{-2(H(z)+H_1(z)+H_3(z))}\ ,\quad z\notin\bigcup_{n\ge 1} \mathcal{D}_n   .
\]
So, it only remains to discuss the estimate on $\mathcal{D}_n$. We will prove that
\bea\label{minor}
 \sum_{i=1}^m |f_i(z)|+(1-|z|)|f_i'(z)|\ge e^{-6H_3(z)},\quad z\in\mathcal{D}_{n}.
\eea
We argue by contradiction. Suppose there is a $z\in \mathcal{D}_n$ where this estimate does not hold. Let $u$ be the closest
point of $\partial\mathcal{D}_{n}=\partial\mathcal{D}_{n}^{H_3}$ to $z$, that is, $u \in \partial\mathcal{D}_{n}$ and $\rho(z,u) = \rho (z, \partial\mathcal{D}_{n})$.  
Then using a Taylor expansion at
$z$, as Tolokonnikov did in the $H^{\infty}$-case, for every $i=1, \ldots , m$, one has
\beqa
 |f_i(u)|&=&\bigl|f_i(z)+f_i'(z)(u-z)+\int_z^u(u-t)f_i''(t)dt\bigr|\\
 &\lesssim& |f_i(z)| + (1-|z|)|f_i'(z)| \rho(z,u)+(1-|z|)^2\sup_{v\in [z,u]}|f_i''(v)|\rho(z,u)^2\\
 &\lesssim& e^{-6H_3(z)}+e^{-6H_3(z)-H_3(\lambda_n)}+e^{ H_2 (v)-2H_3(\lambda_n)},
\eeqa
where $v$ is a suitable point in $\mathcal{D}_n$. Since $\rho(u,\Lambda)=e^{-H_3(\lambda_n)}$, using \eqref{nou} we deduce 
\[
 e^{-(H(u) +H_1(u))} e^{- H_3 (\lambda_n)} \le  \sum_{i=1}^m |f_i(u)|
 \lesssim m\left(2e^{-6H_3(z)}+e^{H_2 (v)-2H_3(\lambda_n)}\right).  
\]
Harnack's inequality \eqref{Harnack1} gives $H_3 (z) \geq H_3 (\lambda_n ) / 2$ and we deduce
\[
 e^{-(H(u) +H_1(u))} e^{- H_3 (\lambda_n)} \le  \sum_{i=1}^m |f_i(u)|
 \lesssim m\left(2e^{-3H_3(\lambda_n)}+e^{H_2 (v)-2H_3(\lambda_n)}\right).  
\]
Since the functions $H$, $H_1$ and $H_2$ are fixed and $H_3$ can be taken arbitrarily large, we obtain a contradiction. Hence \eqref{minor} holds and the statement (c) follows.

(c) $\Lra$ (a). First of all recall that in condition (c) we can assume that the functions $f_i$ are Blaschke products. 
We can also assume that the positive harmonic function $H$ appearing in condition (c) satisfies $\inf \{H(z) : z \in \D \} > \ln(3 m) $. Then Harnack's inequality \eqref{Harnack} gives that for any $h \in \Har_+ (\D)$ one has
\bea\label{Harnack2}
 \frac45 \leq \frac{h(z)}{h(w)} \leq \frac54 \quad \text{ if } \, \rho(w,z) < e^{-2H(z)} .
\eea
Now take $C>1$ big enough to be determined later on, and let 
\[
 E=\bigl\{z\in\D : \sum_{i=1}^m |f_i(z)|\leq e^{-CH(z)}\bigr\}=\cup_n E_n,
\]
where $E_n$ are the 
connected components of $E$. For every $n\in\mathbb N$ choose $\lambda_n\in E_n$, if
any, such that
\[
\sum_{i=1}^m |f_i(\lambda_n)|\le e^{-2CH(\lambda_n)},
\]
and let $\Lambda=\{\lambda_n\}_n$ (we discard those $E_n$ for which
such a $\lambda_n$ does not exist and keep the indexation with $\N$).
Observe that the sum above is trivially bounded by $e^{-2H(\lambda_n)}$.

\emph{Claim 1}. Assume $C \geq 24$. Then for every $\lambda_n\in\Lambda$, one has 
\[
 D(\lambda_n, e^{-2C H(\lambda_n)})\subset E_n\subset D(\lambda_n, e^{-6 H(\lambda_n)})\ .
\]

The first inclusion is an immediate consequence of Lemma~\ref{bounded-f}(a) and Harnack's inequality \eqref{Harnack2}, 
\beqa
 |f_i(z)|&\leq& |f_i(\lambda_n)|+2\rho(z,\lambda_n)\leq e^{-2C H(\lambda_n)}+2 e^{-2C H(\lambda_n)}
 =3e^{-2CH(\lambda_n)}\\
 &\le& 3e^{-(8/5)CH(z)}\le e^{-CH(z)}.
\eeqa
In order to see the second inclusion notice that, by hypothesis, on the set $E$, and so on $E_n$, 
the following estimate holds
\[
 (1-|z|^2) \sum_{i=1}^m |f'_i(z)|\geq e^{-2 H(z)},
\]
and in particular there exists $i$ such that $(1-|\lambda_n|^2)  |f'_i(\lambda_n)|
\geq e^{-2 H(\lambda_n)}/m\ge e^{-3H(\lambda_n)}=\delta$.
Thus by Lemma~\ref{bounded-f}(b), for every $z$ with $\rho(z,\lambda_n)=e^{-6H(\lambda_n)}
=\delta^2$ 
we have $|f_i(z)|\geq e^{-12H(\lambda_n)}$. By Harnack's inequality \eqref{Harnack1} we get
\bea\label{nbhdestim}
 |f_i(z)|\geq e^{-24 H(z)}\quad\textrm{if  }\rho(z,\lambda_n)=e^{-6H(\lambda_n)}.
\eea
Thus, taking $C\ge 24$, we get the desired inclusion.

Observe also that $\partial D(\lambda_n, e^{-6 H(\lambda_n)})\cap E=\emptyset$ and in particular
\[
 \rho(\lambda_n, \lambda_k)\geq\max(e^{-6H(\lambda_n)}, e^{-6 H(\lambda_k)}), \qquad k\neq n.
\]

\begin{lemma}
 The sequence $\Lambda$ constructed above is interpolating for $N$.
\end{lemma}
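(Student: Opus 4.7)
The plan is to verify condition (a) of Theorem~\ref{IntThm}, i.e.\ to exhibit $H^*\in\Har_+(\D)$ with $\prod_{k\neq n}\rho(\lambda_n,\lambda_k)\geq e^{-H^*(\lambda_n)}$ for every $n\in\N$; by \eqref{interpol-local} only the factors with $\rho(\lambda_k,\lambda_n)\leq 1/2$ need be controlled.

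First, I would locate, for each $n$, a zero of some $f_{i_n}$ very close to $\lambda_n$. By hypothesis (c) and the bound $\sum_i|f_i(\lambda_n)|\leq e^{-2CH(\lambda_n)}$, there exists an index $i_n\in\{1,\dots,m\}$ with $(1-|\lambda_n|^2)|f_{i_n}'(\lambda_n)|\geq \frac{1}{2m}e^{-H(\lambda_n)}$ provided $C$ is large enough. A Newton-type argument applied to $f_{i_n}$ (using the second-derivative bound from Lemma~\ref{Nevfcts}) then produces a zero $z_n$ of $f_{i_n}$ at pseudohyperbolic distance $\lesssim e^{-(2C-1)H(\lambda_n)}$ from $\lambda_n$. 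For $C$ sufficiently large this is much smaller than the separation $e^{-6H(\lambda_n)}$ of the $\lambda_n$'s established in Claim~1, hence the sequence $\{z_n\}_n$ inherits the separation $\rho(z_n,z_k)\gtrsim e^{-6H(\lambda_n)}$, and by Corollary~\ref{stability} the interpolating character of $\Lambda$ reduces to that of $\{z_n\}_n$.

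Next, split $\{z_n\}_n=\bigsqcup_{i=1}^m \Lambda_i$ with $\Lambda_i=\{z_n:i_n=i\}\subset Z(f_i)$. Since $f_i$ is a Blaschke product with a simple zero at $z_n\in\Lambda_i$, one has the identity $(1-|z_n|^2)|f_i'(z_n)|=\prod_{w\in Z(f_i)\setminus\{z_n\}}\rho(z_n,w)$; Lemma~\ref{bounded-f}(c) together with Harnack transfers the gradient estimate from $\lambda_n$ to $z_n$ and yields $(1-|z_n|^2)|f_i'(z_n)|\geq e^{-\tilde H(z_n)}$ for some $\tilde H\in\Har_+(\D)$. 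Since $\Lambda_i\setminus\{z_n\}\subset Z(f_i)\setminus\{z_n\}$, dropping the extra factors only enlarges the product, so $\prod_{z'\in\Lambda_i\setminus\{z_n\}}\rho(z_n,z')\geq e^{-\tilde H(z_n)}$ and $\Lambda_i$ is Nevanlinna interpolating. Letting $B_{\Lambda_i}$ denote the Blaschke product with zeros $\Lambda_i$, Theorem~\ref{IntThm}(b) gives $H_i\in\Har_+(\D)$ with $|B_{\Lambda_i}(z)|\geq e^{-H_i(z)}\rho(z,\Lambda_i)$; for $z_n\in\Lambda_{i_n}$ and $i\neq i_n$ one has $z_n\notin\Lambda_i$ with $\rho(z_n,\Lambda_i)\gtrsim e^{-6H(z_n)}$, hence $|B_{\Lambda_i}(z_n)|\geq e^{-H_i(z_n)-7H(z_n)}$. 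Combining,
\[
\prod_{n'\neq n}\rho(z_n,z_{n'}) = \left|\frac{B_{\Lambda_{i_n}}}{b_{z_n}}(z_n)\right| \prod_{i\neq i_n}|B_{\Lambda_i}(z_n)| \geq e^{-H^*(z_n)},
\]
with $H^*=\tilde H+\sum_{i=1}^m(H_i+7H)\in\Har_+(\D)$.

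The main obstacle is the quantitative placement of $z_n$ in the first step: Lemma~\ref{bounded-f}(b) by itself only yields $\rho(z_n,\lambda_n)\leq \delta^2\sim e^{-6H(\lambda_n)}$, comparable to (not strictly smaller than) the separation of the $\lambda_n$'s. One must use a Newton iteration for $f_{i_n}$, whose convergence follows from the universal bound $(1-|z|^2)^2|f_{i_n}''(z)|\lesssim 1$ since $f_{i_n}$ is a Blaschke product, or an iterative bootstrap of the minimum-modulus argument, to obtain a zero strictly inside a much smaller pseudohyperbolic disk; the constant $C$ in the construction of $\Lambda$ must be chosen accordingly.
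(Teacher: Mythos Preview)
Your approach is correct but different from the paper's. The paper proves the Lemma via the harmonic-measure characterization Theorem~\ref{IntThm}(d): on the domain $\Omega_n^C$ obtained by deleting the disks $\mathcal D_k^C=D(\lambda_k,e^{-2CH(\lambda_k)})$, one applies the maximum principle to $\log|f_i|$, using that $\log|f_i|\le -\tfrac{C}{2}H(\lambda_k)$ on each $\partial\mathcal D_k^C$ (since $\mathcal D_k^C\subset E_k$) and $\log|f_i|\le 0$ on $\partial\D$. Evaluating at a point $\zeta\in\partial D(\lambda_n,e^{-6H(\lambda_n)})$ where \eqref{nbhdestim} gives $\log|f_i(\zeta)|\ge -24H(\zeta)$ then bounds $\sum_k\omega(\lambda_n,\partial\mathcal D_k^C,\Omega_n^C)$ by $O(1/C)$, which for large $C$ yields (d). This is short and uses only the smallness of $|f_i|$ on $E$, never locating any zero of $f_i$.

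Your route---find a zero $z_n$ of some $f_{i_n}$ near $\lambda_n$, observe that each $\Lambda_i=\{z_n:i_n=i\}$ sits inside $Z(f_i)$ and inherits the Carleson-type bound from $(1-|z_n|^2)|f_i'(z_n)|=\prod_{w\in Z(f_i)\setminus\{z_n\}}\rho(z_n,w)$, combine the $m$ pieces via Theorem~\ref{IntThm}(b), and transfer back to $\Lambda$ by Corollary~\ref{stability}---also works, and has the virtue of being entirely Blaschke-product arithmetic. The one point you flag as an ``obstacle'' is not: in Lemma~\ref{bounded-f}(b) the parameter $\delta$ may be any number in $[|f(\lambda_n)|^{1/4},(1-|\lambda_n|^2)|f'(\lambda_n)|]$, so taking $\delta\asymp e^{-CH(\lambda_n)/2}$ (rather than the maximal $\delta\sim e^{-3H(\lambda_n)}$) already traps a zero $z_n$ in $D(\lambda_n,e^{-CH(\lambda_n)})$; no Newton iteration is needed. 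Since the interpolation constant $H^*$ you build for $\{z_n\}$ is a fixed multiple $MH$ with $M$ depending only on $m$ (trace the constants through Theorem~\ref{IntThm}(a)$\Rightarrow$(b)), the stability condition $\rho(z_n,\lambda_n)\le\tfrac14 e^{-H^*(z_n)}$ is met once $C>2M$, and there is no circularity.
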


\begin{proof}
 We shall use the characterization given in Theorem~\ref{IntThm}(d). Consider the disks $\mathcal D_n^C=D(\lambda_n, e^{-2C H(\lambda_n)})$ and the domains
 \[
  \Omega_n^C=\D\setminus\bigcup\limits_{\stackrel{k:k\neq n}{\rho(\lambda_n,\lambda_k)\leq 1/2}} \mathcal D_k^C\ .
 \]
Since $\mathcal{D}_n^C\subset E_n$, Harnack's inequality \eqref{Harnack2} and the fact that $\|f_i \|_{\infty} \leq 1$ give that,  for every $i=1, \ldots , m$ 
we have
 \begin{align*}
  \log|f_i(\zeta)|&\leq -C H(\zeta)\leq- \frac{C}{2} H(\lambda_k)\quad \textrm{if}\ \zeta\in\partial\mathcal D_k^C,\\
  \log|f_i(\zeta)|&\leq 0 \qquad \textrm{if}\ \zeta\in\partial\D .
 \end{align*}
Hence, by the maximum principle
\[
 \log|f_i(z)|\leq-\frac{C}{2} \sum_{\stackrel{k: k\neq n}{\rho(\lambda_k,\lambda_n)\leq 1/2}} H(\lambda_k)\omega(z,\partial\mathcal D_k^C,\Omega_n^C)\ ,\qquad z\in\Omega_n^C.
\]
Notice that, by the separation above, the disk $D(\lambda_n, e^{-6 H(\lambda_n)})$ is contained in $\Omega_n^C$. Then, as established in \eqref{nbhdestim} there is $i$ such that
\[
 |f_i(\zeta)|\geq e^{-24 H(\zeta)}\qquad\textrm{if $\zeta\in \partial D(\lambda_n, e^{-6 H(\lambda_n)})\subset \Omega_n^C$},
\]
whence
\[
 \frac{C}{2} \sum_{\stackrel{k: k\neq n}{\rho(\lambda_k,\lambda_n)\leq 1/2}} H(\lambda_k)\omega(\zeta,\partial\mathcal D_k^H, \Omega_n^H)\leq24 H(\zeta), \quad \zeta \in \partial \mathcal D (\lambda_n , e^{-CH(\lambda_n)})
\]
By Harnack's inequality applied to both $H$ and $\omega(\cdot, \partial\mathcal D_k^C,\Omega_n^C)$, we deduce that
\[
 \sum_{\stackrel{k\neq n}{\rho(\lambda_k,\lambda_n)\leq 1/2}} \omega(\lambda_n, \partial\mathcal D_k^H,\Omega_n^H)\leq\frac{192}C\ .
\]
Choosing $C$ big enough we finally have
\[
 \omega(\lambda_n, \partial\D, \Omega_n^H)=1- \sum\limits_{\stackrel{k: k\neq n}{\rho(\lambda_n,\lambda_k) \leq 1/2}} \omega(\lambda_n, \partial \mathcal D_{k}^H , \Omega_n^H)\geq \frac 12 .
\]
\end{proof}

Notice that, by Theorem~\ref{IntThm}(a) and the proof above, there exists $C_0>0$ such that
\begin{equation*}\label{Bn}
 \prod_{k: k \neq n} \rho (\lambda_n , \lambda_k) \geq e^{-C_0 H(\lambda_n)},\quad n\in\mathbb N\ .
\end{equation*}
Although our choice of $\{\lambda_n \}_n$ depends on $C$, the constant $C_0$ is uniform. We indicate to the reader that the Nevanlinna interpolating Blaschke product we are
heading for is not constructed with the zero-set $\Lambda$ but with a sequence close
to $\Lambda$. This, in view of Lemma \ref{stability}, will guarantee that the new sequence is still interpolating. In the sequel we will need to introduce a new constant $D\gg C\gg C_0$, where $C \geq 24$ is the constant
fixed in the preceding discussions. Given a harmonic function $G$, we denote by $\tilde G$ its harmonic conjugate. 

\emph{Claim 2.} For every $n\in\mathbb N$ there exists $i\in\{1,\dots,m\}$ such that  $g_i:=f_i-e^{-12D(H+i\tilde H)}$ has a unique zero $a_n^{(i)}$ in $\mathcal D_n^{6H}$.

By condition (c) we can assume that for some $i\in\{1,\dots,m\}$ (not necessarily unique) we have $(1-|\lambda_n|^2)|f_i^\prime(\lambda_n)|\geq e^{-3H(\lambda_n)}$. 
Since $|f_i (\lambda_n)|\le e^{-2CH(\lambda_n)}$ and $C \geq 6$,
applying again Lemma \ref{bounded-f}(b) we obtain
\bea\label{inferior}
 |f_i(z)|\geq e^{-12H(z)} \quad \textrm{for $z\in \partial \mathcal D_{n}^{6H}$}.
\eea
We use this and Rouch\'e's theorem to compare the number of zeros of $g_i$ and the function $h_i=f_i-f_i(\lambda_n)$ in $\mathcal D_{n}^{6H}$. Observe that $h_i$ vanishes at $\lambda_n $ and 
$(1-|\lambda_n|)|h_i '(\lambda_n)|\ge e^{-3H(\lambda_n)}$, so that with Lemma \ref{bounded-f}(b),
applied to any $\delta<e^{-6H(\lambda_n)}$, it can be shown that $h_i$ does not vanish at any
other point of $\mathcal{D}_n^{6H}$. Now, 
for $z\in \partial \mathcal D_{n}^{6H}$, Harnack's inequality \eqref{Harnack2}, $D \geq C \geq 24$ and \eqref{inferior} give
\begin{align*}
 |g_i(z) -\left(f_i(z)-f_i(\lambda_n)\right)|&=|f_i(\lambda_n)-e^{-12D\, (H+i\tilde H)(z)}|\leq  e^{-2CH(\lambda_n)}+e^{-12D H(z)}\\
 &\leq e^{-C H(z)}< |f_i(z)-f_i(\lambda_n)|\ ,
\end{align*}
as desired. This proves the Claim.  $\square$

The argument works for every $i$ with $(1-|\lambda_n|^2)|f_i^\prime(\lambda_n)|\geq e^{-3H(\lambda_n)}$, but we will pick $a_n^{i}$ for only one $i$. We will denote by $i(n)$ the index in $ \{1, \ldots , m\}$ satisfying Claim 2. The previous argument with Rouch\'e's theorem also allows to show that $\rho(a_n^{(i)},\lambda_n)\leq e^{-CH(\lambda_n)}$. Since $C\gg C_0$, we deduce from Lemma~\ref{stability} that the sequence $A_i:=\{a_n^{(i)}\}_n$ is also interpolating for $N$. 
Let $I_i$ denote the Nevanlinna interpolating Blaschke product with zero set $A_i$.

\emph{Claim 3.} Assume $C \geq 24$. Then $\sum_{j=1}^m |g_j (z)| / |I_j (z)| \geq e^{-4C H(z)}$ for any $z \in \D$.

To see this consider first $z\notin \cup_n \mathcal D_n^{6H}$, so that $\sum_{i=1}^m|f_i(z)|> e^{-CH(z)}$. Hence, there exists $f_i$ such that $|f_i(z)|\geq e^{-2CH(z)}$, and therefore
\bea\label{goverI}
\quad \sum_{j=1}^m\left|\frac{g_j(z)}{I_j(z)}\right|\geq |g_i(z)|\geq |f_i(z)|-e^{-12DH(z)}\geq e^{-2CH(z)}-e^{-12DH(z)}\geq e^{-3CH(z)}.
\eea

Consider now $z\in\mathcal D_n^{6H}$.
Notice first that for $\zeta \in\partial\mathcal D_n^{6H}$ and for $i=i(n)$ by \eqref{inferior}, we have
$|g_i(\zeta)|\ge e^{-3CH(\zeta)}$.
Applying the minimum modulus principle to $g_i/I_i$ we deduce that
\[
 \left |\frac{g_i(z)}{I_i (z)}\right|\geq \inf_{\zeta \in \partial \mathcal D_n^{6H} } |g_i (\zeta)| \geq   \inf_{\zeta \in \partial \mathcal D_n^{6H} } e^{-3C H(\zeta)}  \geq e^{-4C  H(z)}.
\]
This finishes the proof of the Claim. 
$\square$

Since $(g_1/I_1,\ldots,g_m/I_m)$ is unimodular, by the Corona Theorem for $N$ (see Introduction), there exist $h_i\in N$ such that 
\[
 \sum_{i=1}^m \frac{g_i}{I_i}\, h_i \equiv 1\ 
\]
and
\[
\sum_{i=1}^m |h_i (z)| \leq e^{M_0 H(z)} , \, z \in \D \, . 
\]
Here $M_0 = M_0 (C) >0$ is a constant which may depend on $C$ but not $D$, since the estimate in Claim 3 only depends on $C$. 
Since $g_i = f_i - e^{-12 D (H + i \tilde{H})}$, we have 
\begin{equation}\label{function}
 F:=\sum_{i=1}^m f_i (h_i\frac{\prod_{k=1}^m I_k}{I_i}) = \prod_{k=1}^m I_k +e^{-12D\, (H+i\tilde H)} \sum_{i=1}^m h_i\frac{\prod_{k=1}^m I_k}{I_i} \ .
\end{equation}
Since the function $F$ 
is obviously in $I(f_1,\ldots,f_m)$,
we will be done as soon as we show that the zero set of this function is an interpolating sequence for $N$. In order to consider the zeros of $F$ we will again distinguish two cases. 

Observe first that since $\rho(a_n^{(i)}, \lambda_n)\leq e^{-C H(\lambda_n)}$, choosing $C\gg C_0$, 
and observing that $A_i$ are Nevanlinna interpolating sequences, we will have
\bea\label{Rouche2}
 |\prod_{k=1}^m I_k(z)|\geq e^{-2C_0 H(z)}\quad\textrm{for $z\in\D\setminus\cup_n \mathcal D_n^{6H}$}.
\eea
Since 
\[
 |e^{-12D\, (H+i\tilde H)} \sum_{i=1}^m h_i\frac{\prod_{k=1}^m I_k}{I_i} |
 \le e^{(-12D+M_0)H},
\]
which, choosing $D$ large enough, can be assumed neglectible with respect to $e^{-C_0H}$, we see that $F$ cannot vanish outside
the disks $\mathcal D_n^{6H}$.

 To consider the disks $\mathcal D_n^{6H}$, we again use Rouch\'e's theorem to see that $F$ has exactly one zero in such a disk. 
Since $A_i$ is Nevanlinna interpolating we can then conclude by applying the stability result Lemma~\ref{stability}. To apply Rouch\'e's theorem we shall compare the function \eqref{function} with $\prod_{k=1}^m I_k$.
In view of \eqref{Rouche2}, for every $n\in\mathbb N$ and $z\in\partial \mathcal D_n^{6H}$
\begin{align*}
 \Bigl| \prod_{k=1}^m I_k(z) &+ 
 e^{-12D\, (H+i\tilde H)(z)} \sum_{i=1}^m h_i (z) \frac{\prod_{k=1}^m I_k(z)}{I_i(z)}-\prod_{k=1}^m I_k(z)\Bigr|\\
 & \leq e^{-12DH(z)} \sum_{i=1}^m |h_i(z)|\leq e^{-(12D-M_0) H(z)}\leq e^{-2C_0 H(z)}  < \left| \prod_{k=1}^m I_k(z)\right|, 
\end{align*}
as desired.\hfill\qedsymbol


\section{The case of two generators}\label{S3b}

In this section we shall assume $m=2$ and prove the equivalence between condition (d) and (a),(b), (c) in Theorem~\ref{MainThm}. We have already proved that (a) implies that $I(f_1 , \ldots , f_m) = J(f_1 , \ldots , f_m)$ for any $m \geq 2$. Hence we only need to prove the sufficiency of condition (d) when $m=2$. We start with an auxiliary result which allows to reduce the situation to the case where $B_1$ and $B_2$ have no
common zeros.

\begin{lemma}\label{red}
Let $\hat{B}$ be the Blaschke product formed with the common zeros of $f_1$ and $f_2$. Then
$I(f_1,f_2)=J(f_1,f_2)$ if and only if $I(f_1/\hat{B},f_2/\hat{B})=J(f_1/\hat{B},f_2/\hat{B})$.
\end{lemma}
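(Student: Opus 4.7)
The plan is to exploit the factorization $f_i = \hat{B}\, g_i$ with $g_i := f_i/\hat{B}$, noting that $g_1, g_2 \in N$ (since $\hat{B}$ is an inner function dividing $f_i$, the quotient lies in $N$ via the canonical factorization $f_i = B_i S_i E_i$ with $\hat{B} \mid B_i$) and that by construction $g_1$ and $g_2$ have no common zeros. The equivalence will follow by transferring membership in $I$ and in $J$ between the pairs $(f_1,f_2)$ and $(g_1,g_2)$ through multiplication or division by $\hat{B}$.

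For the forward direction, assume $I(f_1,f_2)=J(f_1,f_2)$ and take $h\in J(g_1,g_2)$ with $|h(z)|\le e^{H(z)}(|g_1(z)|+|g_2(z)|)$ for some $H\in\Har_+(\D)$. Then $\hat{B}h\in N$ and $|\hat{B}h(z)|\le e^{H(z)}(|f_1(z)|+|f_2(z)|)$, so $\hat{B}h\in J(f_1,f_2)=I(f_1,f_2)$. Hence $\hat{B}h=f_1u_1+f_2u_2=\hat{B}(g_1u_1+g_2u_2)$ for some $u_1,u_2\in N$, and dividing by $\hat{B}$ (valid as an equality of holomorphic functions away from the zero set of $\hat{B}$, hence everywhere by the identity principle) gives $h=g_1u_1+g_2u_2\in I(g_1,g_2)$.

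For the converse, assume $I(g_1,g_2)=J(g_1,g_2)$ and let $f\in J(f_1,f_2)$, so $|f(z)|\le e^{H(z)}|\hat{B}(z)|(|g_1(z)|+|g_2(z)|)$ for some $H\in\Har_+(\D)$. The key step is to show $k:=f/\hat{B}$ is holomorphic and lies in $N$. Holomorphy follows because at every common zero $z_0$ of $f_1, f_2$, the order of $\hat{B}$ at $z_0$ equals $\min(\mathrm{ord}_{z_0}(f_1),\mathrm{ord}_{z_0}(f_2))$, and the local bound on $|f|$ forces $\mathrm{ord}_{z_0}(f)$ to be at least this minimum, removing the potential pole; that $k\in N$ follows by writing the inner-outer factorization $f=BSE$ and observing $\hat{B}\mid B$, so $k=(B/\hat{B})SE\in N$. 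Outside the zeros of $\hat{B}$ we have $|k(z)|\le e^{H(z)}(|g_1(z)|+|g_2(z)|)$, and by continuity this extends to all of $\D$, so $k\in J(g_1,g_2)=I(g_1,g_2)$. Writing $k=g_1k_1+g_2k_2$ and multiplying by $\hat{B}$ yields $f=f_1k_1+f_2k_2\in I(f_1,f_2)$.

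The main obstacle is the reverse direction, specifically the two regularity claims for $k=f/\hat{B}$: that it is holomorphic (requiring a careful multiplicity count at common zeros using the defining $J$-estimate) and that it belongs to $N$ (requiring the Nevanlinna factorization argument rather than a direct harmonic majorant estimate, since the bound $|k|\le e^H(|g_1|+|g_2|)$ alone does not immediately provide a positive harmonic majorant for $\log_+|k|$ without further argument).
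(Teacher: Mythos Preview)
Your proof is correct and follows the same approach as the paper's: multiply or divide by $\hat{B}$ to transfer between the two pairs of generators, using that $\hat{B}$ divides any $f\in J(f_1,f_2)$. The paper's version is considerably terser---it simply asserts ``in particular $\hat{B}$ divides $f$'' and proceeds---while you spell out the multiplicity argument and the Nevanlinna factorization explicitly.

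One remark: your final caveat is overly cautious. Once you know $k=f/\hat{B}$ is holomorphic, the bound $|k|\le e^{H}(|g_1|+|g_2|)$ \emph{does} immediately give $k\in N$, since $\log_+|k|\le H+\log_+|g_1|+\log_+|g_2|+\log 2$ and each $g_i\in N$ already has a positive harmonic majorant for $\log_+|g_i|$. The detour through the factorization $f=BSE$ with $\hat{B}\mid B$ is valid but unnecessary.
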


\begin{proof}
If $f\in J(f_1/\hat{B},f_2/\hat{B})$, then $|f|\le e^H(|f_1/\hat{B}|+|f_2/\hat{B}|)$ for some $H \in \Har_+ (\D)$,
and so $f\hat{B}\in J(f_1,f_2)=I(f_1,f_2)$ giving $f\in I(f_1/\hat{B},f_2/\hat{B})$.
Conversely, if $f\in J(f_1,f_2)$, then $|f|\le e^H(|f_1|+|f_2|)$, for some $H \in \Har_+ (\D)$.
In particular  $\hat{B}$ divides $f$. 
Hence $f/\hat{B}\in J(f_1/\hat{B},f_2\hat{B})=I(f_1/\hat{B},f_2/\hat{B})$ 
giving $f\in I(f_1,f_2)$.
\end{proof}

In order to prove the sufficiency of condition (d) when $m=2$ we need some more auxiliary results.

\begin{lemma}\label{41}
Let $0<m_1\le m_2\le \cdots\le m_N\le 1$, with $N\ge 2$. Assume $\prod_{j=1}^Nm_j\le \eta<1$
and $\prod_{j=2}^N m_j\le \eta^{1/2}$. Then there exists an integer $k$ with $1\le k<N$ such that
$\prod_{j=1}^km_j\le \eta^{1/4}$ and $\prod_{j=k+1}^N m_j\le \eta^{1/2}$ ($\le \eta^{1/4}$).
\end{lemma}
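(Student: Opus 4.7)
The plan is to choose $k$ via a minimality principle on the partial products and then verify both bounds by exploiting the monotonicity of the sequence.

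Concretely, set $p_j = \prod_{i=1}^{j} m_i$ (with $p_0 = 1$) and $q_j = \prod_{i=j+1}^{N} m_i$ (with $q_N = 1$). As $j$ grows, $p_j$ decreases and $q_j$ grows (since all $m_i \le 1$). I would define
\[
  k = \min\{\,j \geq 1 : p_j \leq \eta^{1/4}\,\},
\]
which is well defined because $p_N \leq \eta < \eta^{1/4}$. By construction $p_k \leq \eta^{1/4}$, so the first required inequality is free; the content is to show $k < N$ and that $q_k \leq \eta^{1/2}$.

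For $k < N$, I would argue by contradiction. If $k = N$, then $p_{N-1} > \eta^{1/4}$. Combined with $p_N = p_{N-1} m_N \le \eta$ this forces $m_N < \eta^{3/4}$. But since the sequence is non-decreasing, $m_N$ dominates every $m_i$, so
\[
 p_{N-1} = \prod_{i=1}^{N-1} m_i \le m_N^{N-1} < \eta^{3(N-1)/4} \le \eta^{3/4} < \eta^{1/4},
\]
using $N \ge 2$, contradicting $p_{N-1} > \eta^{1/4}$.

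For the tail estimate $q_k \le \eta^{1/2}$, I would split on whether $k = 1$ or $k \ge 2$. The case $k = 1$ is immediate: $q_1 = \prod_{j=2}^{N} m_j \le \eta^{1/2}$ is exactly the second hypothesis. For $k \ge 2$, the minimality of $k$ gives $p_{k-1} > \eta^{1/4}$. Because $m_k$ is the largest of $m_1,\dots,m_k$, it dominates the geometric mean: $m_k \ge p_{k-1}^{1/(k-1)} > \eta^{1/(4(k-1))}$. Hence
\[
 p_k = p_{k-1}\, m_k > \eta^{1/4} \cdot \eta^{1/(4(k-1))} = \eta^{k/(4(k-1))},
\]
so that
\[
 q_k = \frac{p_N}{p_k} \le \frac{\eta}{\eta^{k/(4(k-1))}} = \eta^{(3k-4)/(4(k-1))} \le \eta^{1/2},
\]
the last inequality holding iff $k \ge 2$. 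The bound $q_k \le \eta^{1/4}$ is then automatic since $\eta < 1$.

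The only delicate step is ruling out $k = N$; the rest is bookkeeping with the geometric mean inequality, which is forced on us by the hypothesis $m_1 \le \cdots \le m_N$.
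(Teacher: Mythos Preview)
Your proof is correct and follows essentially the same route as the paper's: define $k$ as the least index with $p_k \le \eta^{1/4}$, rule out $k=N$ by the same contradiction, and split the tail estimate into $k=1$ (hypothesis) and $k\ge 2$. The only cosmetic difference is that for $k\ge 2$ you bound $m_k$ via the geometric-mean inequality $m_k \ge p_{k-1}^{1/(k-1)}$, whereas the paper uses the simpler observation $m_k \ge m_1 \ge p_{k-1} > \eta^{1/4}$ (since the product of factors in $(0,1]$ is at most any single factor) to get $p_k > \eta^{1/2}$ directly.
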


\begin{proof}
Let $k$ be the smallest positive integer such that $ \prod_{j=1}^k m_j\le \eta^{1/4}$.
Observe that $k < N$, because otherwise $\prod_{j=1}^{N-1}m_j>\eta^{1/4}$ and it would follow that
$m_N<\eta^{3/4}$, and then 
\[
 \prod_{j=1}^{N-1}m_j<m_1\le m_N<\eta^{3/4}<\eta^{1/4},
\]
which is a contradiction. Hence $k<N$.
If $k=1$, the conclusion follows immediately from the assumption $\prod_{j=2}^Nm_j\le 
\eta^{1/2}$. 
Next, if $k>1$, we have $\prod_{j=1}^{k-1}m_j>\eta^{1/4}$ and $\prod_{j=1}^k m_j\le 
\eta^{1/4}$.
The first estimate gives $m_1>\eta^{1/4}$ and hence $m_j>\eta^{1/4}$ for any
$j=1,\ldots,N$. Then
\[
 \prod_{j=k+1}^Nm_j=\frac{\prod_{j=1}^Nm_j}{m_k\prod_{j=1}^{k-1}m_j}
 \le\frac{\eta}{\eta^{1/4}\eta^{1/4}}=\eta^{1/2}.
\]
\end{proof}

\begin{lemma}\label{ffgg}
Let $f_i,g_i\in N$, $i=1,2$, such that $f_1g_1$ and $f_2g_2$ have no common zeros. 
If $I(f_1g_1,f_2g_2)=J(f_1g_1,f_2g_2)$, then $I(f_1,f_2)=J(f_1,f_2)$.
\end{lemma}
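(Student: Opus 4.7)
My approach is to show $J(f_1,f_2)\subset I(f_1,f_2)$ (the reverse inclusion being trivial) by lifting an element of $J(f_1,f_2)$ up to $J(f_1 g_1, f_2 g_2)$ via multiplication by $g_1 g_2$, applying the hypothesis, and then dividing back.

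First I would take $f\in J(f_1,f_2)$, so $|f|\le e^{H}(|f_1|+|f_2|)$ for some $H\in\Har_+(\D)$. Since $g_1,g_2\in N$, there exist $H_1,H_2\in\Har_+(\D)$ with $|g_i|\le e^{H_i}$, and then
\[
|f g_1 g_2|\le e^{H}(|f_1 g_1|\,|g_2|+|f_2 g_2|\,|g_1|)\le e^{H+H_1+H_2}(|f_1 g_1|+|f_2 g_2|),
\]
so $f g_1 g_2\in J(f_1 g_1, f_2 g_2)=I(f_1 g_1, f_2 g_2)$ by hypothesis. Hence there exist $\phi_1,\phi_2\in N$ with $f g_1 g_2 = \phi_1 f_1 g_1 + \phi_2 f_2 g_2$.

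Next, rewriting this identity as $\phi_1 f_1 g_1 = g_2(f g_1 - \phi_2 f_2)$, at any zero $\lambda$ of $g_2$ of multiplicity $k$ the right-hand side vanishes to order at least $k$. The hypothesis that $f_1 g_1$ and $f_2 g_2$ have no common zeros forces $\lambda$ to be neither a zero of $f_1$ nor of $g_1$, so $f_1(\lambda)g_1(\lambda)\neq 0$ and $\phi_1$ must vanish to order at least $k$ at $\lambda$. The symmetric argument gives $g_1\mid\phi_2$, so $\psi_1:=\phi_1/g_2$ and $\psi_2:=\phi_2/g_1$ are at least holomorphic in $\D$.

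The main step, and the one I expect to be the principal obstacle, is to verify that $\psi_1,\psi_2\in N$; once this is done, dividing the identity by $g_1 g_2$ gives $f=\psi_1 f_1+\psi_2 f_2\in I(f_1,f_2)$. For this I would invoke the Nevanlinna factorization of $\phi_1$ and $g_2$: the Blaschke part of $\phi_1/g_2$ is a bona fide Blaschke product (a sub-Blaschke of the zeros of $\phi_1$); the singular inner factors combine into a ratio $S_a/S_b$ with mutually singular $a,b$ after absorbing common mass, which belongs to $N$; and the outer parts combine into the outer function with boundary modulus $\log|\phi_1^*|-\log|g_2^*|\in L^1(\partial\D)$, using that $\log|F^*|\in L^1$ for every nonzero $F\in N$. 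Equivalently, this amounts to the classical fact that $F/G\in N$ whenever $F,G\in N$ and $F/G$ is holomorphic in $\D$. The subtle point is that the singular inner parts of $\phi_1$ and $g_2$ are a priori unrelated, so the Blaschke divisibility established above is not by itself enough; the full Nevanlinna factorization machinery is needed to conclude the $N$-membership of the quotient.
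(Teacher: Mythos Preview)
Your proof is correct and follows essentially the same route as the paper's: multiply by $g_1g_2$ to land in $J(f_1g_1,f_2g_2)=I(f_1g_1,f_2g_2)$, use the no-common-zeros hypothesis to see that $g_2\mid\phi_1$ and $g_1\mid\phi_2$ as holomorphic functions, and divide back. The paper simply asserts ``we thus may write $h_1=g_2 h_1^*$ for a suitable $h_1^*\in N$'' without further comment, whereas you take some care to justify this step; your justification is fine, and in fact the underlying statement (if $F,G\in N$, $G\not\equiv 0$, and $F/G$ is holomorphic in $\D$, then $F/G\in N$) is a standard consequence of the factorization $G=B_G G_0$ with $G_0$ zero-free in $N$, together with the elementary fact that zero-free elements of $N$ are invertible in $N$ (since $\int\log|G_0(r\zeta)|\,d\sigma$ is constant in $r$, boundedness of $\int\log_+|G_0|$ forces boundedness of $\int\log_+|1/G_0|$). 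Your remark about singular inner parts being ``unrelated'' is a slight red herring: no divisibility of singular factors is needed, only the invertibility of the zero-free part.
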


\begin{proof}
We need to show that $J(f_1,f_2)\subset I(f_1,f_2)$. Let $f\in J(f_1,f_2)$, that is
$|f|\le e^H(|f_1|+|f_2|)$, for some $H\in \Har_+(\D)$. Then there exists another 
$H_1\in\Har_+(\D)$ such that $|fg_1g_2|\le e^{H_1}(|f_1g_1|+|f_2g_2|)$. By assumption, there exist $h_1,h_2
\in N$, such that $fg_1g_2=f_1g_1h_1+f_2g_2h_2$. Thus $f_1g_1h_1$ vanishes at the
zeros of $g_2$, and since $f_1g_1$ and $f_2g_2$ have no common zeros, so that $f_1g_1$ and $g_2$
have no common zeros, it is $h_1$ vanishing
at the zeros of $g_2$. We thus may write $h_1=g_2h_1^*$ for a suitable $h_1^*\in N$.
A similar argument leads to $h_2=g_1h_2^*$ for some $h_2^*\in N$. Thus
$f=f_1h_1^*+f_2h_2^*$.
\end{proof}

\begin{lemma}\label{Blaschke-no-zeros}
 Let $B$ be a Blaschke product with zero sequence $\Lambda$. Let $z\in\D$ be such that $\Lambda\cap D(z,\delta)=\emptyset$ and let $\rho_{\Delta}$ denote the
pseudohyperbolic distance in $\Delta=D(z,\delta)$. Then
\begin{itemize}
 \item [(a)] $|B(z)|^{\frac{1+\rho_{\Delta}(z,w)}{1-\rho_{\Delta}(z,w)}}
 \le |B(w)|
 \le |B(z)|^{\frac{1-\rho_{\Delta}(z,w)}{1+\rho_{\Delta}(z,w)}}, \quad w\in \Delta $,
 
 \bigskip
  \item [(b)] $ (1-|z|^2)|B'(z)|\le \dfrac{|B(z)|}{\delta}\log \dfrac 1{|B(z)|^{2}}$ .
\end{itemize}
\end{lemma}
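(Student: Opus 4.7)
The plan is to exploit the fact that $B$ has no zeros on $\Delta=D(z,\delta)$, so $-\log|B|$ is a positive harmonic function on $\Delta$.

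For part (a), the natural tool is Harnack's inequality on $\Delta$, expressed in terms of the pseudohyperbolic metric $\rho_\Delta$ of that disk: for every positive harmonic $u$ on $\Delta$ and every $w\in\Delta$,
\[
 \frac{1-\rho_\Delta(z,w)}{1+\rho_\Delta(z,w)}\,u(z)\le u(w)\le \frac{1+\rho_\Delta(z,w)}{1-\rho_\Delta(z,w)}\,u(z).
\]
Applied to $u=-\log|B|$, this yields the corresponding double inequality for $-\log|B(w)|$. Multiplying by $-1$ reverses the inequalities, and exponentiating then produces both bounds in (a) simultaneously.

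For part (b), I would pull the configuration back to the unit disk. Using the paper's notation $\phi_z(w)=(z-w)/(1-\bar z w)$, the map $\phi(\zeta)=\phi_z(\delta\zeta)$ is a biholomorphism $\D\to\Delta$ with $\phi(0)=z$ and $\phi'(0)=-\delta(1-|z|^2)$. Hence $g:=B\circ\phi\colon\D\to\D$ is holomorphic and non-vanishing, with $g(0)=B(z)$ and $|g'(0)|=\delta(1-|z|^2)|B'(z)|$. The estimate (b) then reduces to the classical inequality
\[
 |g'(0)|\le |g(0)|\,\log\frac{1}{|g(0)|^2},
\]
valid for every non-vanishing holomorphic self-map $g$ of $\D$. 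Substituting the expressions for $g(0)$ and $|g'(0)|$ and dividing by $\delta$ gives (b) immediately.

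The auxiliary inequality above is in turn obtained by noting that $F=\log(1/g)$ is holomorphic with $\Re F=-\log|g|>0$, and applying Schwarz's lemma to $T\circ F$, where $T$ is a Möbius map of the right half-plane onto $\D$ sending $F(0)$ to $0$; this yields $|F'(0)|\le 2\,\Re F(0)$, which unwinds to $|g'(0)|/|g(0)|\le -2\log|g(0)|$, as required. No serious obstacle is anticipated: both parts rest entirely on the positivity of $-\log|B|$ on $\Delta$ together with standard disk estimates (Harnack for (a); Borel--Carath\'eodory / Schwarz--Pick for the derivative bound in (b)), applied after an explicit Möbius change of variables. The only point needing care is the sign reversal when exponentiating in (a) and the chain-rule computation of $\phi'(0)$ in (b).
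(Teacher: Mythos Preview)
Your proof is correct. Part (a) is exactly the paper's argument: Harnack's inequality on $\Delta$ applied to the positive harmonic function $u=-\log|B|$, followed by exponentiation.

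For part (b) you take a genuinely different route from the paper. The paper computes $B'(z)$ explicitly from the product representation,
\[
(1-|z|^2)|B'(z)|\le \frac{|B(z)|}{\delta}\sum_n \frac{(1-|z|^2)(1-|\lambda_n|^2)}{|1-\bar\lambda_n z|^2},
\]
and then bounds the sum by $\log(1/|B(z)|^2)$ using $1-x\le\log(1/x)$. Your approach instead pulls $\Delta$ back to $\D$ and reduces to the Borel--Carath\'eodory / Schwarz--Pick inequality $|g'(0)|\le 2|g(0)|\log(1/|g(0)|)$ for zero-free self-maps of the disk. Your argument is more conceptual and more general (it uses only that $B$ is bounded by $1$ and zero-free on $\Delta$, not the Blaschke structure), while the paper's is more elementary and self-contained. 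Both yield the same constant, so neither is sharper; the trade-off is abstraction versus directness.
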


 \begin{proof}
The estimates in (a) are just Harnack's inequalities rescaled to $\Delta$ and applied to the positive harmonic function $u=-\log|B|$. To prove (b) let $\Lambda = \{\lambda_n \}_n$. A direct computation shows that
\[
   B'(z)=\sum_{n=1}^\infty \frac{B(z)}{b_{\lambda_n}(z)}
 \frac{-\overline{\lambda}_n}{|\lambda_n|}\frac{1-|\lambda_n|^2}{(1-\bar\lambda_n z)^2}\ .
\]
Hence 
\[
   (1-|z|^2)|B'(z)|\leq \sum_{n=1}^\infty \frac{|B(z)|}{\delta} \frac{(1-|z|^2)(1-|\lambda_n|^2)}{|1-\bar\lambda_n z|^2}
\]
and we finish by using the estimate $\log(1/x)\geq 1-x$, $x>0$, since then
\[
 \sum_{n=1}^\infty \frac{(1-|z|^2)(1-|\lambda_n|^2)}{|1-\bar\lambda_n z|^2}= \sum_{n=1}^\infty 
 (1-\rho^2(\lambda_n,z))\leq 2 \sum_{n=1}^\infty \log\frac 1{\rho(\lambda_n,z)}
=\log\frac 1{|B(z)|^2}\ .
\]
 \end{proof}

\begin{lemma}\label{diff-div}
Let $\Lambda=\{\lambda_n\}_n$ be a sequence of distinct points in $\D$ which is the union
of two Nevanlinna interpolating sequences. Then the trace of $N$ on $\Lambda$ is
\[
 N|\Lambda=\left\{ \{w_n\}_n\ :\ \exists H\in \Har_+(\D)\, :\, \sup_{k:k\neq n}\frac{|w_k-w_n|}{\rho(\lambda_k,\lambda_n)}e^{-H(\lambda_n)-H(\lambda_k)}
 <\infty\right\} .
\]
\end{lemma}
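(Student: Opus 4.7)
The plan is to establish the two inclusions separately. Necessity is a direct consequence of Lemma \ref{Nevfcts}. Sufficiency uses the decomposition $\Lambda=\Lambda_1\sqcup\Lambda_2$ into two Nevanlinna interpolating subsequences, interpolating first on $\Lambda_1$ with some $g_1\in N$ and correcting the remaining error on $\Lambda_2$ by adding a multiple of the Blaschke product $B_1$ with zero set $\Lambda_1$.

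For necessity, given $f\in N$ take $H\in\Har_+(\D)$ with $|f|\le e^H$. If $\rho(\lambda_k,\lambda_n)\le 1/3$, Lemma \ref{Nevfcts}(b) gives $|f(\lambda_k)-f(\lambda_n)|\le \rho(\lambda_k,\lambda_n)\,e^{CH(\lambda_n)}\le\rho(\lambda_k,\lambda_n)\,e^{C(H(\lambda_n)+H(\lambda_k))}$; otherwise $|f(\lambda_k)-f(\lambda_n)|\le 2e^{H(\lambda_n)+H(\lambda_k)}$, and dividing by $\rho(\lambda_k,\lambda_n)\ge 1/3$ still yields a uniform bound depending only on $H$.

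For sufficiency, fix any $\lambda_{k_0}\in\Lambda$; the divided difference condition at $(n,k_0)$ shows that $\log_+|w_n|\lesssim H(\lambda_n)+\textrm{const}$, so by \eqref{TrSpace} the restriction of $\{w_n\}$ to $\Lambda_1$ lies in $N|\Lambda_1$. This yields $g_1\in N$ with $g_1(\lambda_n)=w_n$ on $\Lambda_1$, and we may assume $|g_1|\le e^{H_1}$ for some $H_1\in\Har_+(\D)$. Set $v_n=(w_n-g_1(\lambda_n))/B_1(\lambda_n)$ for $\lambda_n\in\Lambda_2$. I would show that $\log_+|v_n|$ has a positive harmonic majorant on $\Lambda_2$; then \eqref{TrSpace} produces $h\in N$ with $h(\lambda_n)=v_n$ on $\Lambda_2$, and $f=g_1+B_1 h$ solves the interpolation problem on all of $\Lambda$.

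The main obstacle is controlling $|v_n|$ when $\lambda_n\in\Lambda_2$ is close to $\Lambda_1$, since $|B_1(\lambda_n)|$ is then small. I would split into two cases. If $\rho(\lambda_n,\Lambda_1)\ge 1/4$, Theorem \ref{IntThm}(b) applied to $\Lambda_1$ gives $|B_1(\lambda_n)|\gtrsim e^{-H_2(\lambda_n)}$ and the numerator is controlled by the Nevanlinna bounds on $g_1$ and on $|w_n|$. If $\rho(\lambda_n,\Lambda_1)<1/4$, pick a closest $\lambda'\in\Lambda_1$; now $|B_1(\lambda_n)|\ge e^{-H_2(\lambda_n)}\rho(\lambda_n,\lambda')$, while
\[
|w_n-g_1(\lambda_n)|\le |w_n-w(\lambda')|+|g_1(\lambda')-g_1(\lambda_n)|\le \rho(\lambda_n,\lambda')\bigl(C\,e^{H(\lambda_n)+H(\lambda')}+e^{CH_1(\lambda_n)}\bigr)
\]
by the hypothesis and Lemma \ref{Nevfcts}(b) (legitimate since $\rho(\lambda_n,\lambda')<1/4<1/3$). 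The factor $\rho(\lambda_n,\lambda')$ cancels, and Harnack's inequality \eqref{Harnack} makes $H(\lambda')$ comparable to $H(\lambda_n)$, yielding $|v_n|\le e^{H_*(\lambda_n)}$ for some $H_*\in\Har_+(\D)$. The crucial point is that the divided difference hypothesis exactly compensates for the zero of $B_1$ near $\lambda_n$, which is precisely why the union-of-two-interpolating-sequences structure is the right framework.
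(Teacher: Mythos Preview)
Your proposal is correct and follows essentially the same route as the paper's proof: both directions are argued the same way, with necessity obtained from a pointwise divided-difference estimate (you invoke Lemma~\ref{Nevfcts}(b), the paper uses the maximum principle on $z\mapsto \Delta f(\lambda_n,z)$, which amounts to the same thing), and sufficiency obtained by interpolating first on $\Lambda_1$ and then correcting on $\Lambda_2$ via $B_1 h$, using Theorem~\ref{IntThm}(b) to cancel the factor $\rho(\lambda_n,\lambda')$ against the zero of $B_1$. Your explicit justification that $\{w_n\}|_{\Lambda_1}$ has a harmonic majorant (via a fixed reference point $\lambda_{k_0}$) is a detail the paper leaves implicit.
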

It is also true in general that when $\Lambda$ is the union of $n$ Nevanlinna interpolating sequences then the trace coincides with the set of sequences such that the pseudohyperbolic divided differences of order $n-1$ have a positive harmonic majorant (see \cite{HMN}).

\begin{proof} $\boxed{\subseteq}$ Let $\{w_n\}_n\in N|\Lambda$ and let $f\in N$ with $f(\lambda_n)=w_n$, $n\in\mathbb N$. Let $H$ be a positive harmonic majorant of $\log |f|$. Given $\lambda_n,\lambda_k\in\Lambda$, $k\neq n$. Define
\[
 \Delta f(\lambda_n,\lambda_k)=\frac{f(\lambda_k)-f(\lambda_n)}{b_{\lambda_n}(\lambda_k)}.
\]

If $\rho(\lambda_n,\lambda_k)\geq 1/2$ we get
\[
 |\Delta f(\lambda_n,\lambda_k)|
=\left|\frac{f(\lambda_k)-f(\lambda_n)}{b_{\lambda_n}(\lambda_k)}\right|
 \leq \frac 1{\rho(\lambda_n,\lambda_k)}\bigl(e^{H(\lambda_k)}+e^{H(\lambda_n)}\bigr)
\leq  2 e^{H(\lambda_k)+H(\lambda_n)}\ .
\]

If $\rho(\lambda_n,\lambda_k)< 1/2$ apply the maximum principle to the holomorphic
function $z\lmto \Delta f(\lambda_n,z)$ and use Harnack's inequalities \eqref{Harnack1} to get
\begin{align*}
  |\Delta f(\lambda_n,\lambda_k)|&\leq \sup_{\zeta : \rho(\lambda_k,\zeta)=1/2} 
 |\Delta f(\lambda_n,\zeta)|\leq \sup_{\zeta : \rho(\lambda_k,\zeta)=1/2} 2 e^{H(\lambda_n)+H(\zeta)} \leq e^{3H(\lambda_n)+3H(\lambda_k)}\ .
\end{align*}

$\boxed{\supseteq}$
Let $\Lambda=\Lambda_1\cup\Lambda_2$, where $\Lambda_i=\{\lambda^{(i)}_n\}_n$ are Nevanlinna interpolating sequences, $i=1,2$,
and denote by $B_i$ the corresponding Blaschke products. We will also denote $w_k^{i}=w_n$ when $\lambda_k^{(i)}=\lambda_n$.
A usual technique to interpolate on finite unions of  interpolating sequences is to look for 
an interpolating function of the form $h_0+B_1h_1$, where $h_0$ interpolates on $\Lambda_1$
and $h_1$ interpolates suitable values controlled by the divided differences on $\Lambda_2$.
Since by assumption $\{w_k^{(1)}\}_k$ has a majorant $e^{H(\lambda_k^{(1)})}$, 
there exists $h_0\in N$ with $h_0(\lambda_k^{(1)})=w_k^{(1)}$, $k\in\mathbb N$.
If we want an interpolating function of the form $h=h_0 +B_1 h_1$, with $h_1\in N$, then,
 $h(\lambda_k^{(2)})=w_k^{(2)}$ reduces to 
\bea\label{divdiff}
 h_1(\lambda_k^{(2)})=\frac{w_k^{(2)}-h_0(\lambda_k^{(2)})}{B_1(\lambda_k^{(2)})},\qquad k\in\mathbb N.
\eea
Since $\Lambda_2\in\Int N$ we only need to see that the values on the right hand side have a suitable majorant.
Given $\lambda_k^{(2)}$ take $\lambda_k^{(1)}$ such that $\rho(\lambda_k^{(2)},\Lambda_1)=\rho(\lambda_k^{(2)},\lambda_k^{(1)})$. There is no restriction in assuming that $\rho(\lambda_k^{(2)},\lambda_k^{(1)})\leq 1/2$, since otherwise the estimate below is immediate. Since $\Lambda_2$ is a Nevanlinna interpolating sequence, 
by Theorem~\ref{IntThm}(b), there exists $H_1\in \Har_+(\D)$ such that 
\[
 |B_1(\lambda_k^{(2)})|\ge e^{-H_1(\lambda_k^{(2)})}\rho(\lambda_k^{(1)},\lambda_k^{(2)}), \qquad k\in\mathbb N,
\]
and therefore
\begin{align*}
 \left|\frac{w_k^{(2)}-h_0(\lambda_k^{(2)})}{B_1(\lambda_k^{(2)})}\right| &\leq
 \left|\frac{w_k^{(2)}-w_k^{(1)}}{B_1(\lambda_k^{(2)})}\right|
 +\left|\frac{h_0(\lambda_k^{(1)})-h_0(\lambda_k^{(2)})}{B_1(\lambda_k^{(2)})}\right|\\
 &\leq \left(\frac{|w_k^{(2)}-w_k^{(1)}|}{\rho(\lambda_k^{(1)},\lambda_k^{(2)})} +
 \frac{|h_0(\lambda_k^{(1)})-h_0(\lambda_k^{(2)})|}{\rho(\lambda_k^{(1)},\lambda_k^{(2)})}\right) e^{H_1(\lambda_k^{(2)})}\ .
\end{align*}
By hypothesis the first term between parentheses has a majorant of the form $e^{H(\lambda_k^{(1)})+H(\lambda_k^{(2)})}$. The second term can be assumed to satisfy the same estimate because of the first inclusion and the fact that $h_0\in N$. Thus, there exists $H_2 \in \Har_+ (\D)$ such that
\[
  \left|\frac{w_k^{(2)}-h_0(\lambda_k^{(2)})}{B_1(\lambda_k^{(2)})}\right|\leq 2e^{H_2 (\lambda_k^{(1)})+H_2 (\lambda_k^{(2)})}e^{H_1(\lambda_k^{(2)})} \, .
\]
By Harnack's inequality this is bounded by $2 e^{2 H_2(\lambda_k^{(2)})} e^{H_1(\lambda_k^{(2)})}$. Then \eqref{TrSpace} yields the existence of $h_1$ such that \eqref{divdiff} holds.
\end{proof}

\begin{lemma}\label{45}
Let $\Lambda=\{\lambda_n\}_n$ be a separated Blaschke sequence and let
$\delta:=\inf_{k\neq n}\rho(\lambda_k,\lambda_n)>0$.
Given $0<\eps_n<\delta/2$ consider the disks $\mathcal D_n=D(\lambda_n,\eps_n)$. 
Let $B_1$ and $B_2$ be two Blaschke products without common zeros, having each exactly two zeros  in each disk
$\mathcal D_n$. Assume $I(B_1,B_2)=J(B_1,B_2)$. Then there exists $H\in\Har_+(\D)$ such that
\[
 \eps_n>e^{-H(\lambda_n)},\quad n\in \N.
\]
\end{lemma}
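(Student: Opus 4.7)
The plan is to construct a single function $f\in J(B_1,B_2)$ from which the Bezout decomposition provided by the hypothesis $I(B_1,B_2)=J(B_1,B_2)$ yields, via Lemma~\ref{Nevfcts}(b), the lower bound $\epsilon_n\ge e^{-H(\lambda_n)}$. Label the two zeros of $B_1$ in $\mathcal{D}_n$ by $a_n,b_n$; these are distinct from the zeros $c_n,d_n$ of $B_2$ in $\mathcal{D}_n$, all four lying in $D(\lambda_n,\epsilon_n)$, and $\rho(a_n,b_n)\le 2\epsilon_n$.

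I would design $f$ so that $f(a_n)=B_2(a_n)$ and $f(b_n)=0$ for every $n$. Granting this, the hypothesis gives $f=h_1B_1+h_2B_2$ with $h_1,h_2\in N$. Evaluating at $a_n$ (where $B_1(a_n)=0$) yields $h_2(a_n)=1$, and at $b_n$ yields $h_2(b_n)=0$. Choose $K\in\Har_+(\D)$ with $|h_2|\le e^{K}$ on $\D$. For those $n$ with $\rho(a_n,b_n)\le 1/3$, Lemma~\ref{Nevfcts}(b) applied to $h_2$ gives
\[
1\;=\;|h_2(a_n)-h_2(b_n)|\;\le\;\rho(a_n,b_n)\,e^{CK(a_n)}\;\le\;2\epsilon_n\,e^{3CK(\lambda_n)},
\]
where the last step uses Harnack's inequality. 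Rearranging, $\epsilon_n\ge e^{-H(\lambda_n)}$ for $H=3CK+\log 2\in\Har_+(\D)$. The remaining indices (with $\rho(a_n,b_n)>1/3$, so $\epsilon_n>1/6$) satisfy the conclusion trivially.

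The construction of $f$ is the main step. A direct product $f=B_2g$ with $g\in N$ satisfying $g(a_n)=1$ and $g(b_n)=0$ would suffice, but the existence of such $g$ presupposes an interpolation property on $\{a_n,b_n\}$ close in strength to the conclusion itself. I would circumvent this by a Wolff-type $\bar\partial$-correction. Pick smooth cut-offs $\phi_n$ supported in $\mathcal{D}_n$ with $\phi_n\equiv 1$ on the pseudohyperbolic disk $D(a_n,\epsilon_n/4)$ and $\phi_n\equiv 0$ on $D(b_n,\epsilon_n/4)$, and set $g_0=\sum_n\phi_n$. Then $g_0$ is smooth with $|g_0|\le 1$, $g_0(a_n)=1$, $g_0(b_n)=0$, and $\bar\partial g_0$ is supported in a disjoint union of annuli $A_n\subset\mathcal{D}_n\setminus(D(a_n,\epsilon_n/4)\cup D(b_n,\epsilon_n/4))$ on which $B_1$ does not vanish. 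Solve $\bar\partial u=\bar\partial g_0/B_1$ on $\D$ with $u\in N$, and define $f:=B_2(g_0-B_1u)$. Then $f$ is holomorphic, satisfies $|f|\le|B_2|(1+|u|)\le 2e^{H_1}(|B_1|+|B_2|)$ for some $H_1\in\Har_+(\D)$, so $f\in J(B_1,B_2)$; and by construction $f(a_n)=B_2(a_n)g_0(a_n)=B_2(a_n)$ and $f(b_n)=B_2(b_n)g_0(b_n)=0$ (using $B_1(a_n)=B_1(b_n)=0$).

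The principal obstacle is the $\bar\partial$-step: one needs $u\in N$ solving $\bar\partial u=\bar\partial g_0/B_1$ with $\log^+|u|$ admitting a positive harmonic majorant, uniformly in the family $\mathcal{D}_n$. This is a Wolff-type estimate adapted to the Nevanlinna class, in the spirit of Mortini's proof of the corona theorem for $N$, and relies on the pseudohyperbolic separation of the disks $\mathcal{D}_n$ (which concentrates the source term in well-separated annuli) together with a careful analysis of the singularity of $\bar\partial g_0/B_1$ inside each $A_n$.
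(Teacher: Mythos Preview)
Your reduction from the existence of $f$ to the conclusion is fine: once an $f\in J(B_1,B_2)$ with $f(a_n)=B_2(a_n)$, $f(b_n)=0$ is in hand, the Bezout identity and Lemma~\ref{Nevfcts}(b) give $\epsilon_n\ge e^{-H(\lambda_n)}$ exactly as you say. The problem is that the $\bar\partial$-construction of $f$ is circular. On the transition annulus $A_n$ the cut-off varies by $1$ over a pseudohyperbolic width $\asymp\epsilon_n$, so $(1-|z|^2)|\bar\partial g_0|\asymp 1/\epsilon_n$ there; and since both zeros $a_n,b_n$ of $B_1$ lie at pseudohyperbolic distance $\lesssim\epsilon_n$ from $A_n$, one has $|B_1|\lesssim\epsilon_n^2$ on $A_n$. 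Hence the invariant size of $\bar\partial g_0/B_1$ on $A_n$ is $\gtrsim 1/\epsilon_n^{3}$. Any estimate of the form $|u|\le e^{H_1}$ with $H_1\in\Har_+(\D)$ (which you need for $f\in J(B_1,B_2)$) forces the datum to be bounded by $e^{H_2}$ for some $H_2\in\Har_+(\D)$, i.e.\ $\log(1/\epsilon_n)\lesssim H_2(\lambda_n)$---which is the statement of the lemma. The analogy with Mortini's corona proof breaks down precisely because there the data are \emph{a priori} of size $e^{\pm H}$, whereas here the singularity of $1/B_1$ is governed by the unknown $\epsilon_n$. The same circularity appears if you try to build $g$ by interpolation: prescribing $g(a_n)=1$, $g(b_n)=0$ forces values of size $\asymp 1/\rho(a_n,b_n)$ into the divided differences.

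The paper avoids this trap by a combinatorial choice that keeps all interpolation values bounded independently of $\epsilon_n$. After reducing to the case where all zeros lie in $\cup_n\mathcal D_n$, label the zeros of $B_i$ in $\mathcal D_n$ as $c_n^i,d_n^i$ so that $\rho(d_n^1,d_n^2)$ is the \emph{largest} of the four mutual distances, and split $B_i=C_iD_i$ with $C_i=\prod_n b_{c_n^i}$, $D_i=\prod_n b_{d_n^i}$. Since $\Lambda$ is separated, each of these four sequences is Nevanlinna interpolating regardless of $\epsilon_n$, and the maximality choice makes $|C_1(d_n^2)/D_1(d_n^2)|\asymp\rho(c_n^1,d_n^2)/\rho(d_n^1,d_n^2)\le 1$ (and symmetrically). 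Interpolating these bounded values yields $h_i\in N$ with $C_i-D_ih_i$ divisible by $D_{3-i}$, from which $C_1C_2\in J(B_1,B_2)=I(B_1,B_2)$. Peeling off $C_1,C_2$ from the resulting Bezout identity gives $1=D_1f_1^*+D_2f_2^*$, hence $|D_1|+|D_2|\ge e^{-H_1}$, and evaluating at $d_n^2$ produces $\epsilon_n\ge\rho(d_n^1,d_n^2)\ge|D_1(d_n^2)|\ge e^{-2H_1(\lambda_n)}$.
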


\begin{proof}
The assumptions {\it a priori} allow $B_1$ and $B_2$ to have zeros outside
$\cup_{n}\mathcal{D}_n$. In order to get rid of these,
let $h_i$ be the Blaschke product vanishing on the zeros of $B_i$ which are not in $\cup_n
\mathcal{D}_n$. Setting $B_i^0=B_i/h_i$,  Lemma~\ref{ffgg} shows that $I(B_1^0,B_2^0)=J(B_1^0,B_2^0)$ (note that $B_1$ and $B_2$ are assumed to have no common zeros). Thus we can henceforth assume that the zeros of $B_i$, $i=1,2$, are contained in $\cup_n \mathcal D_n$. 

Let $c_n^i, d_n^i$ denote the zeros of $B_i$ in $\mathcal D_n$, $n\in\N$, $i=1,2$. Pick the largest of the mutual distances 
 $\rho(c_n^1, c_n^2),\ \rho(c_n^1, d_n^2),\ \rho(d_n^1, c_n^2),\ \rho(d_n^1, d_n^2)$, say $\rho(d_n^1, d_n^2)$. Then we have
 \bea\label{bigdist}
  2\eps_n\geq \rho(d_n^1, d_n^2)\geq\max\{\rho(c_n^1, c_n^2),\ \rho(c_n^1, d_n^2),\ \rho(d_n^1, c_n^2)\}.
 \eea
For $i=1,2$ let $D_i$ be the Blaschke product with zeros $\{d_n^i\}_n$ and let $C_i=B_i/D_i
=\prod b_{c_n^i}$. Since $\Lambda$ is separated and $B_i$ has exactly two zeros on each $\mathcal{D}_n$ we deduce from \cite[Corollary 1.9]{HMNT} that $C_i$ and $D_i$ are Nevanlinna interpolating Blaschke products.
Hence, taking into account \eqref{bigdist} there exists $H\in\Har_+(\D)$ such that the values
\[
 \left|\frac{C_1(d_n^2)}{D_1(d_n^2)}\right|
 =\frac{\rho(c_n^1,d_n^2)}{\rho(d_n^1,d_n^2)}  \left|\frac{(C_1/b_{c_n^1})(d_n^2)}{(D_1/
 b_{d_n^1})(d_n^2)}\right|
\ ,\qquad  \frac{C_2(d_n^1)}{D_2(d_n^1)}
 =\frac{\rho(c_n^2,d_n^1)}{\rho(d_n^2,d_n^1)}  \left|\frac{(C_2/b_{c_n^2})(d_n^1)}{(D_2/
 b_{d_n^2})(d_n^1)}\right|
\]
are bounded by $e^{H(d_n^2)}$ and $e^{H(d_n^1)}$,  respectively. Consequently, there exist $h_1,h_2\in N$ such that
\[
 h_1(d_n^2)=\frac{C_1(d_n^2)}{D_1(d_n^2)}\ ,\qquad  h_2(d_n^1)=\frac{C_2(d_n^1)}{D_2(d_n^1)}\ .
\]
Hence, there are $g_1,g_2\in N$ with
$C_1 = D_1 h_1 + D_2 g_1$ and $C_2 = D_2 h_2 + D_1 g_2$. 
Next we show that $C_1C_2\in J(B_1,B_2)$. Indeed, assume (without loss of generality) that $|C_2(z)|\leq |C_1(z)|$. Then
\begin{align*}
 |C_1(z) C_2(z)|\leq |(D_1h_1)(z)+(D_2g_1)(z)| |C_2(z)|\leq |h_1(z)||B_1(z)|+ |g_1(z)||B_2(z)|\ .
\end{align*}

Hence $C_1C_2\in J(B_1,B_2)=I(B_1,B_2)$ so that there exist $f_1,f_2\in N$ with
\[
 C_1 C_2 = B_1 f_1+B_2 f_2=C_1 D_1f_1 + C_2 D_2 f_2\ .
\]
Therefore, $f_1$ vanishes at the zeros of $C_2$ and $f_2$ vanishes at the zeros of $C_1$, and
there exist $f_1^*,f_2^*\in N$ with
$
 f_2=C_1 f_2^* $ and  $f_1=C_2 f_1^*$ .
Hence
\[
 C_1 C_2= C_1 D_1 C_2 f_1^*+ C_2 D_2 C_1 f_2^*
\]
and we deduce that
$
 1= D_1 f_1^*+ D_2 f_2^*\ .
$
Then there exists $H_1 \in\Har_+(\D)$ such that
 $ |D_1|+|D_2|\geq e^{-H_1}\ .
$
Consequently, and since $\rho(d_n^1, d_n^2)\leq \eps$ we can use Harnack's inequalities to deduce that
\[
 \eps_n\geq \rho(d_n^1, d_n^2)\geq |D_1(d_n^2)|\geq e^{-H(d_n^2)}\geq e^{-2H(\lambda_n)}\ .
\]
\end{proof}

Let us now move to the proof of (d)$\Lra$(c) in Theorem \ref{MainThm} in the case $m=2$. Recall that we can assume that $f_i=B_i$ are Blaschke products. Let $\Lambda_i$ be the zero set of $B_i$ and denote
\[
 k(z)=\sum_{i=1}^2 (|B_i(z)|+(1-|z|^2) |B_i^\prime(z)|)\, ,  \qquad z\in \D .
\]
In view of \cite[Proposition 4.1]{HMNT}, for any $\delta>0$ there exists $H_\delta\in\Har_+(\D)$ such that
\[
 |B_i(z)|\geq e^{-H_{\delta}(z)}\qquad\textrm{for $z$ with  $\ \rho(z,\Lambda_i)\ge \delta$, $i=1,2$}\ .
\]
Hence, to prove estimate (c) we can assume that $z$ belongs to a Whitney box $T(I)=
\{z=re^{i\theta}\in\D:e^{i\theta}\in I, |I|/2\le 1-r\le |I|\}$ such that $\rho(T(I), \Lambda_i)\leq 1/2$, $i=1,2$. Here $I$ indicates an arc in $\partial\D$. 
Let $\{T(I_j)\}_j$ be the collection of Whitney boxes  satisfying this condition and pick $\alpha_j\in \overline{T(I_j)}$ such that 
\[
 k(\alpha_j)=\min_{z\in  \overline{T(I_j)}} k(z)\ .
\]
To prove (c) we need to construct $H\in\Har_+(\D)$ such that
\begin{equation}\label{A}
 k(\alpha_j)\geq e^{-H(\alpha_j)} , \quad j\in\N\ ,
\end{equation}
since then, by Harnack's inequalities, the inequality propagates to the whole $\overline{T(I_j)}$, that is, if $z\in \overline{T(I_j)}$, we have 
$
 k(z)\geq k(\alpha_j) \geq e^{-H(\alpha_j)} \geq e^{-CH(z)} .
$

Splitting $\{\alpha_j\}_j$ into finitely many subsequences if necessary, one can assume that the pseudohyperbolic disks $\mathcal  D_j=D(\alpha_j, 1/2)$ are pairwise disjoint. 

For $i=1,2$ and $j\in\N$ let $B_i(j)$ be the subproduct of $B_i$ formed with the zeros of $B_i$ placed outside $\mathcal D_j$. Then (again using \cite[Proposition 4.1]{HMNT}, see also \eqref{interpol-local}) there exists $H_0\in\Har_+(\D)$ independent of $i$ and $j$ such that
\begin{equation}\label{B}
 |B_i(j)(\alpha_j)|\geq e^{-H_0(\alpha_j)}\ ,\quad j\in\N .
\end{equation}
We can also assume that each $\mathcal D_j$ contains at least two zeros of $B_1$ and two zeros of $B_2$. 
Indeed, suppose $\lambda$ is the only zero of $B_1$ in 
$\mathcal{D}_j$ (if there is none, then $B_1(j)=B_1$ and $k(\alpha_j)\ge |B_1(\alpha_j)|
\ge e^{-H(\alpha_j)}$ so that there is nothing to do). 
If $\rho(\alpha_j,\lambda)\ge e^{-H_1(\alpha_j)}$ for a suitable fixed $H_1$, then since
$|B_1 |=|B_1 (j)| |b_{\lambda}|$ we get \eqref{A}. 
If  $\rho(\alpha_j,\lambda)\le e^{-H_1(\alpha_j)}$, first  observe that
$(1-|\lambda|^2)|B_1'(\lambda)|=|B_1(j)(\lambda)|\ge e^{-2 H_0(\lambda)}$.  
Then, by Lemma \ref{Nevfcts}(b) we deduce that, for a sufficiently big $H_1$ (depending on 
$H_0$ only), $(1-|\alpha_j|)|B_1'(\alpha_j)||\ge e^{-3H_0(\alpha_j)}$,
which again yields \eqref{A}.  

We can also assume that 
\bea\label{k100}
 k(\alpha_j)\leq e^{-100 H_0(\alpha_j)},
\eea 
since otherwise \eqref{A} holds.

For $i=1,2$ and $j\in\N$ let $\lambda_j^{(i)}$ be a zero of $B_i$ such that $\rho(\alpha_j, \lambda_j^{(i)})=\rho(\alpha_j,\Lambda_i)$. Denote $B_{i,j}=B_i/b_{\lambda_j^{(i)}}$. 
We claim that there exists a universal constant $C>0$ such that 
\begin{equation}\label{C}
 |B_{i,j}(\alpha_j)|\leq C k(\alpha_j)^{1/2},\qquad j\in\N,\, i=1,2\ .
\end{equation}
To see this notice first that we have $|B_i(\alpha_j)|\leq k(\alpha_j)$. If $\rho(\alpha_j, \lambda_j^{(i)})\geq k(\alpha_j)^{1/2}$ we obtain \eqref{C} from
\[
 |B_{i,j}(\alpha_j)|=\frac{|B_i(\alpha_j)|}{\rho(\alpha_j, \lambda_j^{(i)})}\leq k(\alpha_j)^{1/2}\ .
\]
If $\rho(\alpha_j, \lambda_j^{(i)})\leq k(\alpha_j)^{1/2}$ we use Lemma~\ref{bounded-f}(c) to see that
\[
\left|(1-|\alpha_j|)| B_i^\prime(\alpha_j)|- (1-|\lambda_j^{(i)}|)|B_i^\prime(\lambda_j^{(i)})|\right|\leq 6 \rho(\alpha_j, \lambda_j^{(i)})\ .
\]
Since $(1-|\alpha_j|)| B_i^\prime(\alpha_j)|\leq k(\alpha_j)$ we deduce that $(1-|\lambda_j^{(i)}|)|B_i^\prime(\lambda_j^{(i)})|\leq 7 k(\alpha_j)^{1/2}$, that is,
$
 |B_{i,j}(\lambda_j^{(i)})|\leq 7 k(\alpha_j)^{1/2}\ .
$
Since $\rho(\alpha_j,\lambda_j^{(i)})\leq k(\alpha_j)^{1/2}$, by Schwarz's lemma 
we deduce that $|B_{i,j}(\alpha_j)|\leq C_1 k(\alpha_j)^{1/2}$ for some $C_1>0$ and \eqref{C} holds also in this case.

\medskip

For $i=1,2$ and $j\in \N$ let
\[
 E_{i,j}=\{z\in\D : B_i(z)=0\, \textrm{and}\, \rho(z,\alpha_j) < 1/2 \}\ ,
\]
and let $b_{i,j}$ be the Blaschke product with zeros in $E_{i,j}$ so that $B_i=b_{i,j} B_i(j)$. 
Since $|B_i(\alpha_j)|\leq k(\alpha_j)$, estimates \eqref{B}, \eqref{k100} and \eqref{C} give
\begin{align}\label{D}
  & |b_{i,j}(\alpha_j)|\leq  k(\alpha_j) e^{H_0(\alpha_j)} \leq  k(\alpha_j)^{99/100}\ ,\\
  & \prod_{\stackrel{z\in E_{i,j}}{z\neq\lambda_j^{(i)}}} \rho(z,\alpha_j)
 =\frac{|B_{i,j}(\alpha_j)|}{|B_i(j)(\alpha_j)|}\leq C k(\alpha_j)^{1/2} e^{H_0(\alpha_j)} \leq C k(\alpha_j)^{1/2-1/100}\ . 
\end{align}

In order to prove \eqref{A} we will now split $\{\alpha_j\}_j$ into different pieces and consider
different cases according to the number of zeros of $B_1$ and $B_2$ in the following
neighborhoods of $\alpha_j$: $U_j=D(\alpha_j,  k(\alpha_j)^{1/10})$ and
$\tilde{U}_j=D(\alpha_j,  k(\alpha_j)^{1/100})\supset U_j$. Here are the cases we are going to discuss
now:
\begin{itemize}
\item [(i)] At least one Blaschke product has at least two zeros in $U_j$. The set of these
$\alpha_j$ will be denoted by $A_1$. Splitting possibly $A_1$ into two subsequences we can 
assume that $B_1$ has at least two zeros in $U_j$ (in case $B_2$ has at least two zeros in $U_j$ while
$B_1$ has not, inversing the r\^oles of $B_1$ and $B_2$ yields the exact same estimate). In this case
we will distinguish three  subcases.
\begin{itemize}
\item [(i)-a.] $B_2$ has at least two zeros in $\tilde{U}_j$. The set of these $\alpha_j$ will be
denoted by $A_{11}$.
\item [(i)-b.] $B_2$ has no zero in $\tilde{U}_j$. The set of these $\alpha_j$ will be
denoted by $A_{12}$.
\item [(i)-c.] $B_2$ has exactly one zero in $\tilde{U}_j$. The set of these $\alpha_j$ will be
denoted by $A_{13}$.
\end{itemize}
\item [(ii)] Both Blaschke products have at most one zero in $U_j$. The set of these $\alpha_j$ will be denoted by $A_2$. 
\end{itemize}
We will establish \eqref{A} in each of these cases.

{\bf Case (i)-a.} We will start with
 $\alpha_j\in A_{11}$. For $i=1,2$ pick two zeros of $b_{i,j}$ in $\tilde U_j$ and let 
$\tilde{b}_{i,j}$ be the corresponding Blaschke product of degree 2.
Consider $\tilde B_i=\prod_j \tilde{b}_{i,j}$ where the product is taken over all $j$ such that $\alpha_j\in A_{11}$. Since $\tilde B_i$ is a subproduct of $B_i$, the assumption (d) and Lemma~\ref{ffgg} give $I(\tilde B_1, \tilde B_2)=J(\tilde B_1,\tilde B_2)$.
Applying Lemma~\ref{45} with $\eps_j=k(\alpha_j)^{1/100}$ we obtain $H\in\Har_+(\D)$ such that
\[
 k(\alpha_j)^{1/100}\geq e^{-H(\alpha_j)},\quad \alpha_j\in A_{11}\ .
\]
This gives the required estimate \eqref{A} for the points in $A_{11}$.

{\bf Case (i)-b.}
The idea in this case is to replace $B_2$ by an appropriate perturbation $B_2-G\tilde{B}_1$, where $\tilde{B}_1$ is 
a sub-product of $B_1$ vanishing exactly twice in each $\tilde{U}_j$,  in order to generate two zeros (controlled
by Rouch\'e'e theorem) and then conclude as in Case (ii)-a.

For $\alpha_j\in A_{12}$ the function $b_{2,j}$ has no zero in $\tilde U_j$. 
For each $\alpha_j\in A_{12}$ pick two zeros of $B_1$ in $U_j$ and let $\tilde B_1$ be the Blaschke product formed with these zeros as in case (i)-a. 
Since $U_j\subset \mathcal{D}_j$ and the disks $\mathcal{D}_j$
are disjoint, $\tilde{B}_1$ is a Blaschke product whose zeros
form a union of two Nevanlinna interpolating sequences \cite[Corollary 1.9]{HMNT}.
Hence there exists $H\in\Har_+(\D)$ such that for every zero $\lambda\in U_j$ of $\tilde{B}_1$,
and $z$ with $\rho(z,\alpha_j)=k(\alpha_j)^{1/30}$,
\bea\label{B1tilde}
 |\tilde B_1(z)|\ge 
  e^{-H(z)}\rho(z,\lambda)\ge e^{-H(z)}\dist(z,\partial U_j)
 \ge k(\alpha_j)^{1/15} e^{-H(z)}\ .
\eea
Let $G=e^{H+i\tilde H}$, where $\tilde H$ is the harmonic conjugate of $H$. By Lemma~\ref{ffgg}, $I(\tilde B_1,B_2)=J(\tilde B_1,B_2)$. Then, observing that $G$ is invertible in $N$, one has
\[
 I(\tilde{B}_1,B_2)=I(\tilde B_1, G\tilde B_1-B_2) \subset J(\tilde B_1, G\tilde B_1-B_2) 
 \subset J(\tilde{B}_1,B_2)=I(\tilde{B}_1,B_2)\ ,
\]
hence
\[
 I(\tilde B_1, G\tilde B_1-B_2) = J(\tilde B_1, G\tilde B_1-B_2) \ .
\]
Now, for points $z\in \D$ such that $\rho(z,\alpha_j)=k(\alpha_j)^{1/30}$ we have, by Lemma~\ref{Blaschke-no-zeros} and the assumption:
\begin{align}\label{Rouche}
 \left|\tilde B_1(z) G(z)-(\tilde B_1(z) G(z)-B_2(z))\right|=|B_2(z)|\leq
 |B_2(\alpha_j)|^{\frac{1-\rho_j}{1+\rho_j}}
 \leq k(\alpha_j)^{\frac{1-\rho_j}{1+\rho_j}}\ ,
\end{align}
where $\rho_j = \rho_{\tilde U_j} (z, \alpha_j)$ and $\rho_{\tilde U_j}$ indicates the pseudohyperbolic distance in $\tilde U_j$.  
Since $\rho(z,\alpha_j)=k(\alpha_j)^{1/30}$ and $\tilde U_j=D(\alpha_j, k(\alpha_j)^{1/100})
\supset U_j$, we have 
\[
 \rho_{\tilde U_j}(z,\alpha_j)\leq k(\alpha_j)^{1/30- 1/100}\ .
\]
Indeed we can assume $\alpha_j=0$ and let $\phi:\D\longrightarrow \tilde U_j$ be given by $\phi(w)=k(\alpha_j)^{1/100} w$; then 
\[
 \rho_{\tilde U_j}(z,\alpha_j)=\rho(\frac{z}{k(\alpha_j)^{1/100}},0)=\frac{|z|}{k(\alpha_j)^{1/100}}\leq k(\alpha_j)^{1/30 - 1/100}\ .
\]
Since we can assume that $k(\alpha_j)$ is small, say $k(\alpha_j)^{1/30 - 1/100}<\eps$, 
we deduce from \eqref{Rouche} and \eqref{B1tilde} that
\[
 \left|\tilde B_1(z) G(z)-(\tilde B_1(z) G(z)-B_2(z))\right|\leq k(\alpha_j)^{\frac{1-\eps}{1+\eps}}<k(\alpha_j)^{1/15}<|\tilde B_1(z) G(z)|\ .
\]

Then, by Rouch\'e's theorem $\tilde B_1 G-B_2$ has two zeros in $D(\alpha_j, k(\alpha_j)^{1/30})$.
Observe that we can replace $\tilde{B}_1G-B_2$ by the Blaschke product vanishing on the zeros
of $\tilde{B}_1G-B_2$, and we can thus argue as we have done for $A_{11}$ (note that $k(\alpha_j)$ now 
only gives the size of $\mathcal{D}_j$, $U_j$ and $\tilde{U}_j$, and it only depends on the fact
that the Blaschke products under consideration have zeros in these neighborhoods, but not on the
explicit form of these products).

{\bf Case (i)-c.}
Recall that $A_{13}$ is the set of $\alpha_j\in A_{1}$ such that $b_{2,j}$ has one zero in $\tilde U_j$. If $\alpha_j\in A_{13}$, the zero set of $b_{2,j}$ in $\tilde U_j$ must be $\lambda_j^{(2)}$. Recall from \eqref{C} that
\[
 |B_{2,j}(\alpha_j)|\le C k(\alpha_j)^{1/2}
\]
and $B_{2,j}$ has no zeros in $\tilde U_j$. Hence by \eqref{B} and \eqref{k100}, we deduce that
\[
 \Bigl|\frac{B_2(\alpha_j)}{\prod\limits_{\alpha_k\in A_{13}} b_{\lambda_k^{(2)}}(\alpha_j)}\Bigr|
 =\Bigl|\frac{B_{2,j}(\alpha_j)}{\prod\limits_{\stackrel{k\neq j}{\alpha_k\in A_{13}}}b_{\lambda_k^{(2)}}(\alpha_j)}\Bigr|
 \le C k(\alpha_j)^{1/2} e^{H_0(\alpha_j)}\leq C k(\alpha_j)^{0.49}.
\]
Thus, replacing $B_2$ by $B_2/\prod\limits_{\alpha_k\in A_{13}} b_{\lambda_j^{(2)}}$  we can assume that $B_2$ has no zeros in $\tilde{U_j}$ and we can argue as in the previous case.

{\bf Case (ii).}
For $\alpha_j\in A_2$ and $i=1,2$ the function $b_{i,j}$ has at most one zero in $U_j$. If it has
one zero, this must actually be $\lambda_j^{(i)}$. 
In this case, from \eqref{B} and \eqref{C},
\bea\label{bij}
\left|\frac{b_{i,j}(\alpha_j)}{b_{\lambda_j^{(i)}}(\alpha_j)}\right| 
 = \left|\frac{B_{i,j}(\alpha_j)}{B_i(j)(\alpha_j)}\right|\le Ck(\alpha_j)^{1/2}e^{H_0(\alpha_j)}
 \le Ck(\alpha_j)^{0.49}.
\eea
Hence, replacing $b_{i,j}$ by $b_{i,j}/b_{\lambda_j^{(i)}}$ we can assume that $b_{i,j}$ has no zeros in $U_j$ and satisfies the above estimate \eqref{bij}. Observe from this estimate that the initial zero-set 
$E_{i,j}$ cannot be reduced to the sole point $\lambda_j^{(i)}$. We will henceforth assume that
$E_{i,j}$ does not contain any point in $U_j$.
In order to apply Lemma~\ref{41} write $E_{i,j} = \{a_k (i,j) : k=1, \ldots , N \}$, where the points $a_k (i,j)$ are taken so that the corresponding distances $m_k= m_k (i,j) = \rho(a_k (i,j),\alpha_j)$ satisfy 
$m_1\le m_2 \le\cdots \le m_N$. In particular $a_1 (i,j)$ is the closest point of
$E_{i,j}$ to $\alpha_j$, and it is outside $U_j$, that is,  $\rho(a_1 (i,j),\alpha_j)\ge k(\alpha_j)^{1/10}$. 
According to \eqref{bij}, and setting $\eta=Ck(\alpha_j)^{0.49}$, we have
\[
 \prod_{k=1}^N m_k=|b_{i,j}(\alpha_j)|\le Ck(\alpha_j)^{0.49}=\eta.
\]
Moreover
\[
 \prod_{k=2}^N m_k=\frac{|b_{i,j}(\alpha_j)|}{\rho(a_1 (i,j),\alpha_j)}\leq C k(\alpha_j)^{0.39}
 \le \eta^{1/2}\ ,
\]
when $k(\alpha_j)$ is sufficiently small (which we can assume). We are thus in the conditions
of Lemma~\ref{41}, which allows to split the product $b_{i,j}$ into two sub-products, denoted by 
$b_{i,j}^*$, $b_{i,j}^{**}$ each of which is
controlled by $\eta^{1/4}=C^{1/4}k(\alpha_j)^{0.49/4}$. More concretely
\[
 b_{i,j}=b_{i,j}^* b_{i,j}^{**},\qquad j\in\N\ , \ i=1,2\ ,
\]
and 
\bea\label{bij**estim}
 |b_{i,j}^*(\alpha_j)|\leq  |b_{i,j}^{**}(\alpha_j)|\leq C_1 k(\alpha_j)^{0.1225}
\eea
(if the first inequality does not hold interchange the roles of $b_{i,j}^*$ and $b_{i,j}^{**}$).
Let 
\[
 B_i^*=\prod_{j=1}^\infty\ b_{i,j}^*,\qquad B_i^{**}=\prod_{j=1}^\infty\ b_{i,j}^{**}\ ,
\]
where the product is taken over the indices $j$ such that $\alpha_j\in A_2$. For $j\in\N$ and $i=1,2$ we have
\begin{equation}\label{F}
|B_i^*(\alpha_j)|+ |B_i^{**}(\alpha_j)|\leq 2C_1 k(\alpha_j)^{0.1225}\ .
\end{equation}
Moreover, taking into account \eqref{B}, there exists $H_0\in\Har_+(\D)$ such that
\begin{equation}\label{G}
 |B_i^*(\alpha_j)|\leq e^{H_0(\alpha_j)} |B_i^{**}(\alpha_j)|\ .
\end{equation}
Split $A_2$ into two sequences $A_2=A_{21}\cup A_{22}$, where 
\[
A_{21}=\{\alpha_j : |B_1^{**}(\alpha_j)|\leq |B_2^{**}(\alpha_j)|\}\ , \qquad
A_{22}=\{\alpha_j : |B_2^{**}(\alpha_j)| < |B_1^{**}(\alpha_j)|\} \ .
\]
For $i=1,2$ we will construct $H_{2i}\in\Har_+(\D)$ such that $k(\alpha_j)\geq e^{-H_{2i}(\alpha_j)}$ for any $\alpha_j\in A_{2i}$. This will give \eqref{A} also in this case and finish the proof. Let us explain how to construct $H_{21}$. The same argument applies to $H_{22}$.
For $\alpha_j\in A_{21}$ pick $\alpha_j^*\in \D$ with $\rho(\alpha_j^*,\alpha_j)=|B_1^{**}(\alpha_j)|/4$. Observe that \eqref{G} yields
\bea\label{fracB1}
 \frac{|B_1(\alpha_j)|}{|B_2^{**}(\alpha_j)|} \leq |B_1^{*}(\alpha_j)|\frac{|B_1^{**}(\alpha_j)|}{|B_2^{**}(\alpha_j)|}\leq |B_1^{*}(\alpha_j)|
 \leq e^{H_0(\alpha_j)} 4 \rho(\alpha_j^*,\alpha_j)\ .
\eea
Since $|B_1^{**}(\alpha_j)|\leq |B_2^{**}(\alpha_j)|$ and $\rho(\alpha_j^*,\alpha_j)=|B_1^{**}(\alpha_j)|/4$, Schwarz' Lemma (see Lemma \ref{bounded-f}(a)) gives $|B_2^{**}(\alpha_j^*)|\geq  |B_2^{**}(\alpha_j)| / 2$ and $|B_1^{**}(\alpha_j^*)|\leq  3|B_1^{**}(\alpha_j)| / 2$. Hence, using
again Lemma \ref{bounded-f}(a) and \eqref{G},
\begin{align}\label{fracB2}
 \frac{|B_1(\alpha_j^*)|}{|B_2^{**}(\alpha_j^*)|}&\leq|B_1^{*}(\alpha_j^*)|\frac{|B_1^{**}(\alpha_j^*)|}{|B_2^{**}(\alpha_j^*)|}\leq 3 |B_1^{*}(\alpha_j^*)|
 \leq 3 \bigl(|B_1^{*}(\alpha_j)|+|B_1^{*}(\alpha_j^*)-B_1^{*}(\alpha_j)|)\nonumber\\
 &\leq 3 \bigl(|B_1^{*}(\alpha_j)|+2\rho(\alpha_j^*,\alpha_j)|)
   = 3 \bigl(|B_1^{*}(\alpha_j)|+\frac{|B_1^{**}(\alpha_j)|}{2}\bigr)\nonumber\\
 &\leq C_2 e^{H_0(\alpha_j)} |B_1^{**}(\alpha_j)|= 4 C_2 e^{H_0(\alpha_j)} \rho(\alpha_j^*,\alpha_j)\ ,
\end{align}
where $C_2 >0$ is an absolute constant. From \eqref{fracB1} and \eqref{fracB2} we get
\[
 \left|\frac{B_1(\alpha_j^*)}{B_2^{**}(\alpha_j^*)}-\frac{B_1(\alpha_j)}{B_2^{**}(\alpha_j)}\right|
 \le \frac{|B_1(\alpha_j^*)|}{|B_2^{**}(\alpha_j^*)|}+\frac{|B_1(\alpha_j)|}{|B_2^{**}(\alpha_j)|}
 \le 4(C_2+1) e^{H_0(\alpha_j)} \rho(\alpha_j^*,\alpha_j)
\]
Hence the sequence defined by $w(\alpha_j)=B_1(\alpha_j)/B_2^{**}(\alpha_j)$ and
$w(\alpha_j^*)=B_1(\alpha_j^*)/B_2^{**}(\alpha_j^*)$ is in the trace space defined 
on the sequence $\{\alpha_j,\alpha_j^*\}_{\alpha_j\in A_{21}}$, and
according to Lemma~\ref{diff-div} we find $h\in N$ such that
\[
 h(\alpha_j)=\frac{B_1(\alpha_j)}{B_2^{**}(\alpha_j)}\ ,\qquad h(\alpha_j^*)=\frac{B_1(\alpha_j^*)}{B_2^{**}(\alpha_j^*)}\ ,\qquad \alpha_j\in A_{21}\ .
\]
Setting $b$ the Blaschke product with zeros $\alpha_j\in A_{21}$ and $b^*$ the Blaschke product with zeros $\alpha_j^*$,  we thus get $g\in N$ such that 
\begin{equation}\label{H}
B_1=B_2^{**} h + bb^* g\, .
\end{equation}
Since $I(B_1,B_2)=J(B_1,B_2)$, Lemma~\ref{ffgg} yields $ I(B_1,B_2^{**})=J(B_1,B_2^{**})$.
Now \eqref{H} gives also $I(B_1,B_2^{**})=I(bb^*g, B_2^{**})$ and  $J(B_1,B_2^{**})=J(bb^*g, B_2^{**})$. Hence, 
$I(bb^*g, B_2^{**})=J(bb^*g, B_2^{**})$, and again by Lemma~\ref{ffgg},
$
 I(bb^*, B_2^{**})=J(bb^*, B_2^{**})\ 
$
(observe that $bb^*g$ and $B_2^{**}$ --- which is a subproduct of $B_2$ --- have no common zeros, 
since by \eqref{H} those common zeros would be in common with $B_1$, which we excluded).

Now notice that $bb^*$ has two zeros in $D(\alpha_j, |B_2^{**}(\alpha_j^*)|/2)$.
Also, from \eqref{bij**estim}
we can deduce that $D(\alpha_j, |B_2^{**}(\alpha_j^*)|/2)\subset D(\alpha_j, C_1 k(\alpha_j)^{0.1})$.
Hence we are in the same situation as we were discussing for $A_1$, now applied to
$(B_2^{**},bb^*)$, and therefore there exists $H_{21}\in\Har_+(\D)$ such that
\bea\label{estimA21}
\quad |B_2^{**}(\alpha_j)|+(1-|\alpha_j|) \Big[|(B_2^{**})'(\alpha_j)|+|(bb^*)'(\alpha_j)|\Big]\geq e^{-H_{21}(\alpha_k)}\ , \ \alpha_j\in A_{21}\ .
\eea
Now, by \eqref{bij**estim}, $ |B_2^{**}(\alpha_j)|\leq 2 C_1  k(\alpha_j)^{0.1225}$.
Also $B_2^{**}$ has no zeros in $U_j$, and so Lemma~\ref{Blaschke-no-zeros}(b) gives that 
\[
 (1-|\alpha_j|) |(B_2^{**})'(\alpha_j)|
 \leq \dfrac{|B_2^{**} (\alpha_j)|}{k(\alpha_j)^{1/10}} \log |B_2^{**} (\alpha_j)|^{-2} \lesssim k(\alpha_j)^{0.02} , \ \alpha_j \in A_{21} 
\]
Moreover, since $(bb^*)' (\alpha_j) = b'(\alpha_j)b^*(\alpha_j)$ and 
$(1-|\alpha_j|)|b'(\alpha_j) | \leq 2$, we get
\[
 (1-|\alpha_j|)|(bb^*)'(\alpha_j)|
 \leq 2 \rho(\alpha_j , {\alpha}_j^*) \leq |B_1^{**} (\alpha_j)| /2 
\]
and, again by \eqref{bij**estim}, this expression is controlled by $C_1k(\alpha_j)^{0.1225}$.
As a result, there exists an absolute constant $C_3>0$ such that the left hand side of \eqref{estimA21} is upper bounded by $C_3 k(\alpha_j)^{0.02}$, and we deduce that
\[
 C_3 k(\alpha_j)^{0.02} \geq e^{-H(\alpha_j)}\ , \alpha_j\in A_{21}\ ,
\]
as desired.\hfill\qedsymbol

Finally let us show that {when $m \geq 3$,} condition (d) does not imply the equivalent conditions (a), (b) or (c) in Theorem \ref{MainThm}. The example is analogous to the one given in the context of $H^\infty $ in \cite{GMN}. Let $B_1$, $B_2$ be Nevanlinna interpolating Blaschke products with zero sets $\Lambda_1$ and $\Lambda_2$. We first claim that 
\bea\label{tres}
I(B_1^2 , B_2^2 , B_1 B_2) = J(B_1^2 , B_2^2 , B_1 B_2) \, .
\eea
To prove this we can assume that $B_1$ and $B_2$ have no common zeros. Let $f \in J(B_1^2 , B_2^2 , B_1 B_2)$. Then there exists $H \in \Har_+ (\D)$ such that 
\begin{eqnarray}\label{counterex}
|f(z)| \leq e^{H(z)} (|B_1(z)|^2 + |B_2(z)|^2 + |B_1(z)B_2 (z)| ), \quad z \in \D.
\end{eqnarray} 
Then $|f(\lambda)|\le e^{H(\lambda)}|B_1(\lambda)|^2$ for $\lambda\in \Lambda_2$ so that there exists
$g_1 \in N$ with $g_1 (\lambda) = f(\lambda)/ B_1^2 (\lambda)  $, $\lambda \in \Lambda_2$. This implies that there is $g_2 \in N$ such that $f= g_1 B_1^2 + B_2 g_2$. Observe that for every $\lambda \in \Lambda_1$ we have $|g_2 (\lambda)|/|B_2(\lambda)| = |f(\lambda)|/ |B_2 (\lambda)|^2 $ which by
\eqref{counterex} is bounded by $e^{H(\lambda)} $. Hence there exists $g_3 \in N$ with $g_3 (\lambda) = g_2 (\lambda) / B_2 (\lambda) $, $\lambda \in \Lambda_1$. Hence, there exists $g_4 \in N$ with $g_2 = B_2 g_3 + B_1 g_4 $. Finally, $f=B_1^2 g_1 + B_2^2 g_3 + B_1 B_2 g_4$ and $f \in I(B_1^2 , B_2^2 , B_1 B_2)$. Hence \eqref{tres} holds. However, if the sequences $\Lambda_1$ and $\Lambda_2$ are too close, then using condition (c) of Theorem 1.1 it can be seen that the ideal $I(B_1^2, B_2^2 , B_1 B_2)$ cannot contain a Nevanlinna interpolating Blaschke product.


\section{Two open problems}\label{S4}

\subsection{The stable rank of the Nevanlinna class}

The first open problem we discuss concerns the stable rank of the Nevanlinna algebra.
Recall that an $m$-tuple $(a_1,\ldots,a_m)$ of elements of a commutative unital algebra $A$ is called \emph{unimodular} if the ideal it generates is the whole algebra, that is, \ there exists an $m$-tuple $(b_1,\ldots, b_m)$ in $A^m$ such that $\sum_{i=1}^m a_ib_i=1$. The $m$-tuple $(a_1,\ldots,a_m)$ is called \emph{reducible} if 
there exists an $(m-1)$-tuple $(x_1,\ldots,x_{m-1})$ in $A^{m-1}$ such that $(a_1+x_1 a_m,\ldots,a_{m-1}+x_{m-1}a_m)$ is 
unimodular (so, the ideal  generated by $(a_1,\ldots, a_m)$
contains a specific $(m-1)$-tuple that already generates $A$).
The \emph{stable rank} of the algebra is the least $m$ for which
every unimodular $m+1$-tuple is reducible.

It is known that the stable rank of the disk algebra and of $H^{\infty}$ is equal to one (see 
\cite{CS} or \cite{JMW} for the disk algebra and \cite{Tr} for $H^{\infty}$).  The stable rank for the Nevanlinna class
is unknown, but the following result shows that it is at least two.

\begin{proposition}\label{rkge2}
The stable rank of the Nevanlinna class is at least 2.
\end{proposition}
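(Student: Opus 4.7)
The plan is to exhibit a unimodular pair $(a_1,a_2)\in N^2$ which is \emph{not} reducible, i.e.\ such that no $x\in N$ makes $a_1+xa_2$ invertible in $N$. Recall that $f\in N$ is invertible if and only if $f$ is zero-free on $\D$ and $\log|f|$ is the difference of two positive harmonic functions; equivalently, $f=e^g$ for a holomorphic $g:\D\to\mathbb{C}$ with $\Re g\in\Har_+(\D)-\Har_+(\D)$. In particular, invertible elements are severely constrained objects, much more so than in $H^\infty$, and this rigidity is what we will exploit.

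The counterexample will be sought among pairs of Blaschke products $(B_1,B_2)$ with disjoint zero sets $\Lambda_i=\{\lambda_n^{(i)}\}_n$, $i=1,2$. By the Corona Theorem for $N$ stated in the Introduction, unimodularity of $(B_1,B_2)$ reduces to a single estimate $|B_1(z)|+|B_2(z)|\geq e^{-H(z)}$ for some $H\in\Har_+(\D)$, and such pairs are easy to produce (for instance, by splitting a Nevanlinna interpolating sequence, as characterized in Section~\ref{S2}, into two disjoint subsequences). Assume for contradiction that such a pair is reducible: $F:=B_1+xB_2$ is invertible in $N$ for some $x\in N$. Since $F$ has no zeros on the simply connected disk, $F=e^g$ for a holomorphic $g:\D\to\mathbb{C}$ with $\Re g\in\Har_+(\D)-\Har_+(\D)$. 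Evaluating at each $\lambda_n^{(2)}\in\Lambda_2$ (where $B_2$ vanishes) yields
\[
 g(\lambda_n^{(2)})=\log|B_1(\lambda_n^{(2)})|+i\bigl(\arg B_1(\lambda_n^{(2)})+2\pi k_n\bigr)
\]
for some integers $k_n$. This prescribes $\Re g$ at $\Lambda_2$ (compatibly with unimodularity, which forces $|B_1(\lambda_n^{(2)})|\geq e^{-H(\lambda_n^{(2)})}$) and, more importantly, prescribes $\Im g$ modulo $2\pi\mathbb{Z}$ in terms of $\arg B_1(\lambda_n^{(2)})$.

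The main obstacle, and the heart of the proof, is to construct $B_1,B_2$ so that no such $g$ can exist: even with the freedom to choose the integers $k_n$, the required values of $\Im g$ at $\Lambda_2$ cannot be matched by the harmonic conjugate of any $u\in\Har_+(\D)-\Har_+(\D)$ interpolating the prescribed moduli. A promising route is to cluster $\Lambda_1$ and $\Lambda_2$ near a common boundary point, so that the admissible harmonic conjugates are severely constrained there, and to arrange the arguments $\arg B_1(\lambda_n^{(2)})$ to produce argument increments incompatible with any such conjugate. Converting this qualitative obstruction into a quantitative contradiction --- possibly via a winding or cocycle argument in the spirit of Treil's $H^\infty$ proof \cite{Tr}, or through boundary comparison on appropriately chosen arcs --- is the delicate technical step on which the proof hinges.
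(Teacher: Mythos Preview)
Your framework is sound up to the point where you evaluate $F=e^g$ at the zeros of $B_2$ and obtain $\Re g(\lambda_n^{(2)})=\log|B_1(\lambda_n^{(2)})|$. But you then set this constraint aside as ``compatible with unimodularity'' and turn to the imaginary part, planning an argument/winding obstruction that you do not actually carry out. This is the gap: the real-part constraint is \emph{not} automatically satisfiable, and in fact the paper obtains the contradiction entirely from it, with no use of $\Im g$ at all.

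The point you are missing is the rigidity of functions in $\Har_+(\D)-\Har_+(\D)$. Any such function is $P[\nu]$ for a real measure $\nu$ of finite total variation, and for every $\zeta\in\partial\D$ one has the radial Fatou-type limit $(1-|z|^2)\,P[\nu](z)\to\nu(\{\zeta\})$ as $z\to\zeta$ radially. Thus the sequence $(1-|\lambda_n^{(2)}|^2)\,\Re g(\lambda_n^{(2)})$ must converge whenever $\{\lambda_n^{(2)}\}$ accumulates radially at a single boundary point. The paper exploits this by taking $\Lambda_1=\{1-2^{-n}\}_n$ (an $H^\infty$-interpolating sequence on a radius), and choosing $\mu_n$ close to $\lambda_n$ so that $|B_1(\mu_n)|=e^{-1/(1-|\lambda_n|)}$ for $n$ even and $e^{-2/(1-|\lambda_n|)}$ for $n$ odd. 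Then $(B_1,B_2)$ is unimodular (easy, since each $\Lambda_i$ is $H^\infty$-interpolating and the two sequences stay within controlled pseudohyperbolic distance), but $(1-|\mu_n|^2)\log|B_1(\mu_n)|$ oscillates between roughly $-1$ and $-2$, so it has no limit. Since this sequence would have to equal $(1-|\mu_n|^2)\,\Re g(\mu_n)$ and hence converge to $\nu(\{1\})$, we have the desired contradiction.

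In short: the obstruction lives in the real part, not the imaginary part, and the key lemma is the elementary radial limit $(1-|z|^2)P[\nu](z)\to\nu(\{\zeta\})$. Your proposed route through arguments and harmonic conjugates is much harder and is left unresolved in your write-up; the paper's route avoids it entirely.
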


It is worth mentioning that any triple $(f_1 , f_2 , f_3) \in N^3$ such that for some $i$ the zeros of $f_i$ form a Nevanlinna interpolating sequence, can be reduced. The argument uses  Theorem \ref{IntThm}, but it is lenghty and we do not include the details here. 
\\
\\
{\bf Open problem:} Is the stable rank of $N$ equal to $2$?

\begin{proof}[Proof of Proposition \ref{rkge2}]
Suppose to the contrary that the stable rank of $N$ is one and let us reach a contradiction.
For any unimodular pair of Blaschke products, 
there will then exist $\Phi_1\in N$ such that $B_1+\Phi_1 B_2$ 
is invertible in $N$, i.e.\ 
\bea\label{rk1}
 B_1+\Phi_1 B_2=e^f\, ,
\eea 
where 
$\Re(f)=H_+-H_-$, for some $H_+,H_+\in\Har_+(\D)$. 
 We will show that this is not possible in general.
To this end, let $\Lambda_1=\{\lambda_n\}_n:=\{1-2^{-n}\}_n$ and $B_1$ the associated Blaschke product. 
The sequence $\Lambda_1$ is $\Hi$-interpolating. Take now $\{\mu_n\}_n\subset (0,1)$ with $\rho(\lambda_n,\mu_n)$ small enough so that
\bea\label{IC}
 |B_1(\mu_n)|=\left\{
 \begin{array}{ll}
  e^{-\frac{1}{1-|\lambda_n|}} &\text{if $n$ even}\\
  e^{-\frac{2}{1-|\lambda_n|}} &\text{if $n$ odd}.
 \end{array}
 \right.
\eea
Set $\Lambda_2=\{\mu_n\}_n$ and $B_2$ its Blaschke product. 

We shall see first  that $(B_1, B_2)$ is unimodular, i.e, that there exists $H\in\Har_+(\D)$ such that 
$
 |B_1|+|B_2|\geq e^{-H}\ .
$
Fix a $\delta>0$ such that the regions $\Omega_n=D(\lambda_n,\delta)\cup D(\mu_n,\delta)$ are mutually disjoint. Since $\Lambda_1$ and $\Lambda_2$ are $H^\infty$-interpolating sequences,
there exists $\eta>0$ such that
\[
|B_i(z)|\ge \eta\ ,\ z \in \D \setminus \cup_n \Omega_n, \qquad i=1,2.
\]
Thus we only need to care about the estimate on $D(\lambda,\delta)$,  for $\lambda\in\Lambda_1\cup 
\Lambda_2$.  So suppose $\lambda=\lambda_n$ or $\lambda=\mu_n$. Since $|B_1/b_{\lambda_n}|$ and $|B_2/b_{\mu_n}|$ are bounded below on $D(\lambda , \delta)$ (by Carleson's condition), we only need to take care of $|b_{\lambda_n}(z)| + |b_{\mu_n}(z)|$. By  \eqref{IC}
\[
 \rho(\lambda_n,\mu_n)=|b_{\lambda_n}(\mu_n)|\ge  e^{-\frac{2}{1-|\lambda_n|}}\ .
\]
By the triangular inequality
$
 \left| \rho(\lambda_n,z)- \rho(z,\mu_n)\right|\leq \rho(\lambda_n,\mu_n)\ ,
$ thus either $\rho(\lambda_n,z)$ or $\rho(\mu_n,z)$ are greater than $(1/2)e^{-\frac{2}{1-|\lambda_n|}}$. Take now $c$ (independent of $n$) such that for $z\in D(\lambda,\delta)$,
\[
(1/2) e^{-\frac{2}{1-|\lambda_n|}}\geq e^{-\frac{c}{1-|z|}}\ .
\]
With this
\[
|B_1(z)|+|B_2(z)|\geq e^{-\frac{c}{1-|z|}}\ , \qquad z\in D(\lambda,\delta)\ .
\]
Since 
\[
H_0(z)=\Re(\frac{1+z}{1-z})=\frac{1-|z|^2}{|1-z|^2}\in\Har_+(\D)
\]
and
\[
 H_0(z)\asymp\frac 1{1-|z|}\ ,\qquad z\in D(\lambda,\delta)
\]
this finally implies that $(B_1, B_2)$ is unimodular.

Let us now show that the pair $(B_1,B_2)$ cannot be reduced.
Equation \eqref{rk1} on $\mu_n$ yields
\[
 \log |B_1(\mu_n)|=H_+(\mu_n)-H_-(\mu_n)=P[\nu](\mu_n)\ ,\quad n\in\N,
\]
where $\nu$ is a finite measure on $\partial\D$ such that $\Re(f)=P[\nu]$. Then, since $\{\mu_n\}_n$ tends
radially towards $1$,
\[
 \lim_{n\to\infty} (1-|\mu_n|^2)P[\nu](\mu_n)=\nu(\{1\})\ .
\]
But from \eqref{IC} we see that 
$\{(1-|\mu_n|^2)\log|B_1(\mu_n)|\}_n$ 
has no limit, so we have reached a contradiction.
\end{proof}

\subsection{The $f^2$ problem}

In the late seventies T. Wolff presented a problem on ideals of $H^\infty$, known now as the $f^2$ problem,  which was finally solved by S. Treil in \cite{Tr2}. We now discuss an analogous problem in the Nevanlinna class. Let $f_1,\ldots,f_n$ be  functions in the Nevanlinna class, and let $f\in N$ be such that there exists
$H\in \Har_+(\D)$ with
\bea\label{cond}
 |f(z)|\le e^{H(z)}(|f_1(z)|+\cdots |f_n(z)|)^p, \quad z\in \D,
\eea
for some $p\ge 1$. Does it follow that $f\in I(f_1,\ldots,f_n)$ ?

As in the $H^{\infty}$ case, when $p>2$, the $\overline{\partial}$ estimates by T. Wolff show
that the answer is affirmative. When $p<2$ the answer is in general negative, as the following example shows.
Let $N$ be an integer such that $N+1>2Np$, $f=B_1^NB_2^N$, $f_1=B_1^{N+1}$ and
$f_2=B_2^{N+1}$. Then \eqref{cond} holds but $f\notin I(f_1,f_2)$ if $(B_1,B_2)$ is not
unimodular in $N$.

{\bf Open problem:} What happens in the case  $p=2$?

\end{document}